\documentclass[12pt]{article}
\usepackage{amssymb}
\usepackage{amsmath}
\usepackage{amsthm}
\usepackage{color}
\usepackage{comment}
\usepackage{geometry}		
\usepackage{graphicx}

\makeatletter
	\@addtoreset{equation}{section}
\makeatother

\geometry{
	hmargin={25mm,25mm},
	vmargin={25mm,30mm}   
}

\let\originalleft\left
\let\originalright\right
\renewcommand{\left}{\mathopen{}\mathclose\bgroup\originalleft}
\renewcommand{\right}{\aftergroup\egroup\originalright}

\begin{document}

\newcommand{\ee}{\varepsilon}

\newtheorem{theorem}{Theorem}
\newtheorem{conjecture}[theorem]{Conjecture}
\newtheorem{lemma}[theorem]{Lemma}
\newtheorem{proposition}[theorem]{Proposition}

\theoremstyle{definition}
\newtheorem{definition}{Definition}

\title{Nonsmooth folds as tipping points.}

\author{
D.J.W.~Simpson\\\\
School of Mathematical and Computational Sciences\\
Massey University\\
Palmerston North, 4410\\
New Zealand
}
\maketitle

\begin{abstract}

A nonsmooth fold is where an equilibrium or limit cycle of a nonsmooth dynamical system hits a switching manifold and collides and annihilates with another solution of the same type. We show that beyond the bifurcation the leading-order truncation to the system in general has no bounded invariant set. This is proved for boundary equilibrium bifurcations of Filippov systems, hybrid systems, and continuous piecewise-smooth ODEs, and grazing-type events for which the truncated form is a continuous piecewise-linear map. The omitted higher-order terms are expected to be incapable of altering the local dynamics qualitatively, implying the system has no local invariant set on one side of a nonsmooth fold, and we demonstrate this with an example. Thus if the equilibrium or limit cycle is attracting the bifurcation causes the local attractor of the system to tip to a new state. The results also help explain global aspects of the bifurcation structures of the truncated systems. 

\end{abstract}



\section{Introduction}
\label{sec:intro}

Bifurcations are critical parameter values at which the dynamical behaviour of a system changes in a fundamental way.
For example oscillations in chemical kinetics are often onset by Hopf bifurcations \cite{RiRe81},
turbulent fluid flow is usually generated through period-doubling cascades \cite{KaTr92},
and Type I excitability in neurons is caused by SNIC bifurcations \cite{RiEr89}.
These examples require local smoothness in the equations of motion.
If the equations are nonsmooth, additional bifurcations are possible \cite{DiBu08}.

Nonsmooth models commonly belong to one of three classes: piecewise-smooth continuous ODEs, Filippov systems, and hybrid systems.
The phase space of these systems contain switching manifolds where the ODEs are
non-differentiable, discontinuous, or a map is applied, Fig.~\ref{fig:trivAllSchem}.
In each case a {\em boundary equilibrium bifurcation} (BEB) occurs
when an equilibrium of a smooth component of the ODEs hits a switching manifold as parameters are varied.
Functionally the equilibrium is well-defined on both sides of the bifurcation,
but only on one side of the bifurcation is it an equilibrium of the system where it is said to be {\em admissible};
on the other side of the bifurcation it is {\em virtual}.

\begin{figure}[b!]
\begin{center}
\includegraphics[width=10cm]{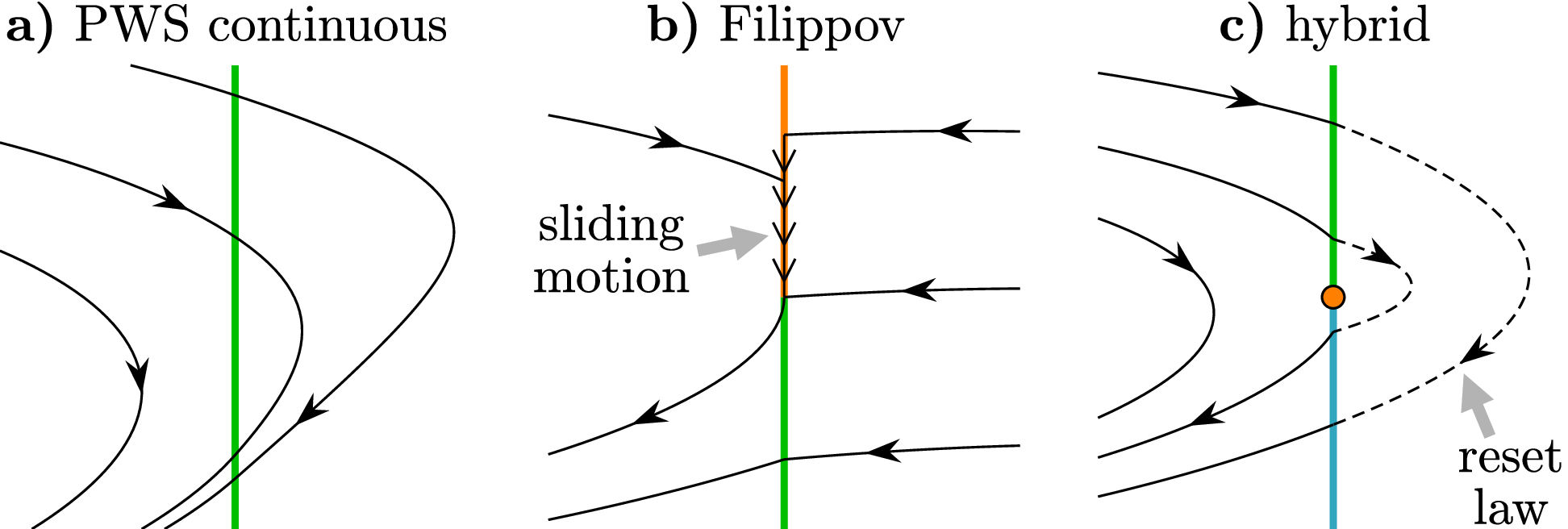}
\caption{
Schematic phase portraits of two-dimensional nonsmooth systems with one switching manifold (vertical line).
In (a) the ODEs are continuous but non-differentiable on the switching manifold.
In (b) the ODEs are discontinuous on the switching manifold
and Filippov's convention is used to specify sliding motion on the switching manifold.
In (c) the system involves ODEs and a map (reset law) that instantaneously transports the system state from the top half of the
switching manifold to the bottom half of the switching manifold.
Such hybrid systems are commonly used to model mechanical systems with hard impacts \cite{BlCz99,Br99,Ib09}.
\label{fig:trivAllSchem}
} 
\end{center}
\end{figure}

Generic BEBs involve two equilibria,
where for Filippov and hybrid systems the second equilibrium is a {\em pseudo-equilibrium}
(an equilibrium of the sliding or sticking motion) \cite{DiNo08}.
Both equilibria are admissible on exactly one side of the bifurcation, and this immediately presents us with two cases:
Either the equilibria are admissible on different sides of the bifurcation, as in Fig.~\ref{fig:trivBEBa},
or the equilibria are admissible on the same side of the bifurcation, as in Fig.~\ref{fig:trivBEBb}.
The first case is termed {\em persistence}.
The second case is termed a {\em nonsmooth fold}
because if we consider only admissible solutions
then at the bifurcation two equilibria collide and annihilate, analogous to a saddle-node bifurcation or fold.

\begin{figure}[b!]
\begin{center}
\includegraphics[width=10cm]{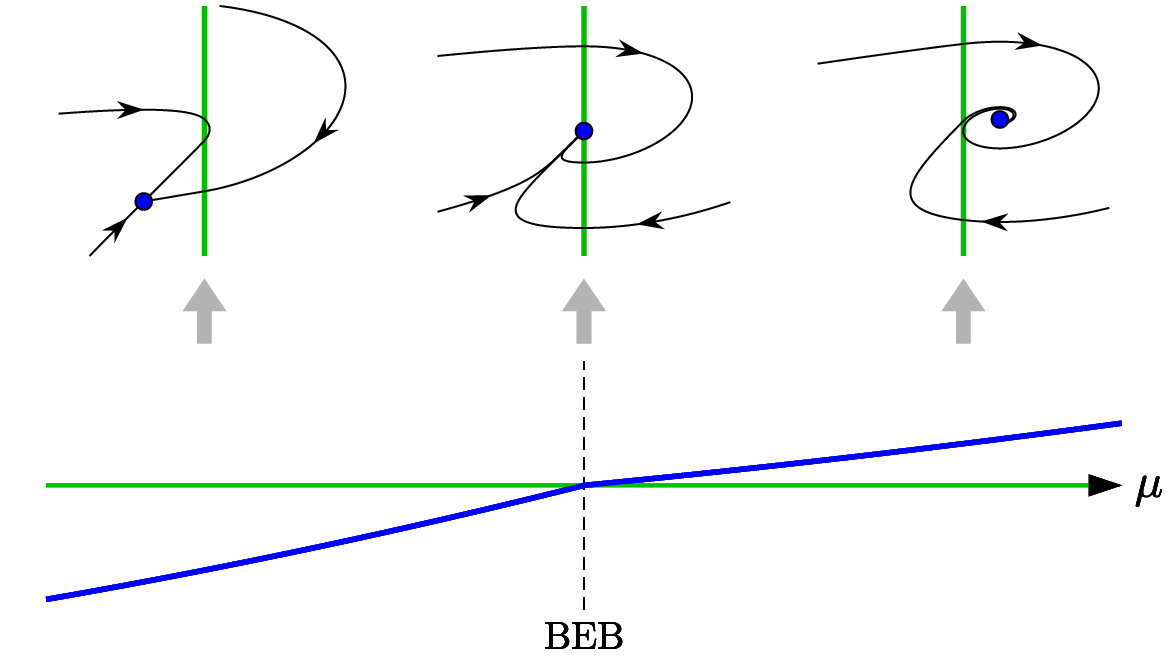}
\caption{
A bifurcation diagram and sample phase portraits of a piecewise-smooth continuous ODE system
that experiences a persistence-type BEB as a parameter $\mu$ is varied.
The blue curves indicate the location of the admissible equilibrium in relation to the switching manifold.
\label{fig:trivBEBa}
} 
\end{center}
\end{figure}

\begin{figure}[b!]
\begin{center}
\includegraphics[width=10cm]{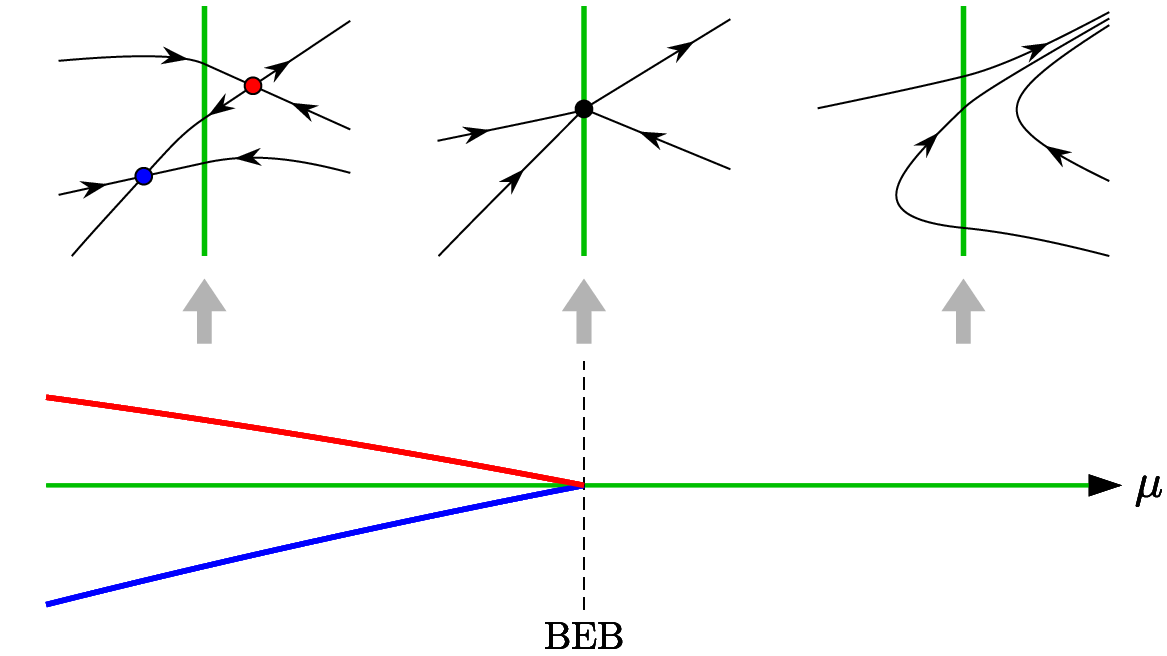}
\caption{
A bifurcation diagram and sample phase portraits of a piecewise-smooth continuous ODE system
that experiences a nonsmooth fold BEB as a parameter $\mu$ is varied.
To the left of the bifurcation the system has two admissible equilibria;
to the right of the bifurcation it has no admissible equilibria.
Throughout this paper stable solutions are coloured blue and unstable solutions are coloured red.
\label{fig:trivBEBb}
} 
\end{center}
\end{figure}

BEBs readily generate other invariant sets growing out of the bifurcation point,
such as limit cycles \cite{KuRi03,SiMe12} and chaotic sets \cite{DiNo08,Si16c,Si18d},
and the various transitions that BEBs bring about have been heavily studied.
It is well known that the local dynamics is usually captured by the leading-order terms of a Taylor expansion
of each smooth component of the system centred at the bifurcation point.
This leads to a truncated form that is more amenable to an exact analysis.
Indeed when analysing families of BEBs
in mathematical models it is usually convenient to work with the truncated form
in order to accurately identify codimension-two points.

In this paper we describe a simple property of BEBs that appears to have been overlooked.
We show that on the side of a nonsmooth fold where both equilibria are virtual
the system has no local invariant sets, at least for the truncated form.
This result applies to piecewise-smooth continuous ODEs, Filippov systems, and hybrid systems,
and with no restriction on the dimension of the system.
Also, if a limit cycle has a degenerate interaction with a switching manifold,
the same result applies when the associated Poincar\'e map is continuous and asymptotically piecewise-linear.
This occurs for grazing-sliding bifurcations \cite{DiKo02}, corner collisions \cite{DiBu01c},
and event collisions for piecewise-smooth ODEs with time-delay in the switching condition \cite{Si06}.
To prove the result, different calculations are required for each setting,
but in each case there is a certain direction in which solutions evolve monotonically and hence eventually diverge.
This is a consequence of piecewise-linearity,
the absence of admissible equilibria,
and a continuity constraint on the vector field or map
(for Filippov systems the vector field can be viewed as continuous through tangency points).

We expect that in all settings if the truncated form has no local invariant set
then neither does the full system because the terms that have been omitted are higher order.
In \S\ref{sec:hots} we verify this for an example and outline the technical difficulties
that need to be overcome in order to prove this in general.
If true then any nonsmooth fold for which one of the two solutions is attracting has the features of a tipping point:
By allowing the parameter that controls the bifurcation to vary slowly in time,
the system state changes rapidly after it passes the nonsmooth fold.
Fig.~\ref{fig:trivStommel} illustrates this for the system
\begin{equation}
\begin{split}
\dot{T} &= 1 - T - k(T,S) T, \\
\dot{S} &= \beta(\mu - S) - k(T,S) S,
\end{split}
\label{eq:oceanModel}
\end{equation}
where $k(T,S) = \alpha \beta |T - S|$.
This is a version of Stommel's two-box model \cite{St61} for ocean circulation
($T$ and $S$ represent temperature and salinity differences).
If $\mu$ is a fixed parameter, the system has a nonsmooth fold at $\mu = 1$.
If $\mu$ allowed to decrease slowly, the system state heads rapidly to the upper equilibrium branch after passing $\mu = 1$.
This has been studied further by Roberts and coworkers \cite{RoGl14,RoSa17} and Budd {\em et al.}~\cite{BuGr22}.

\begin{figure}[b!]
\begin{center}
\includegraphics[width=10cm]{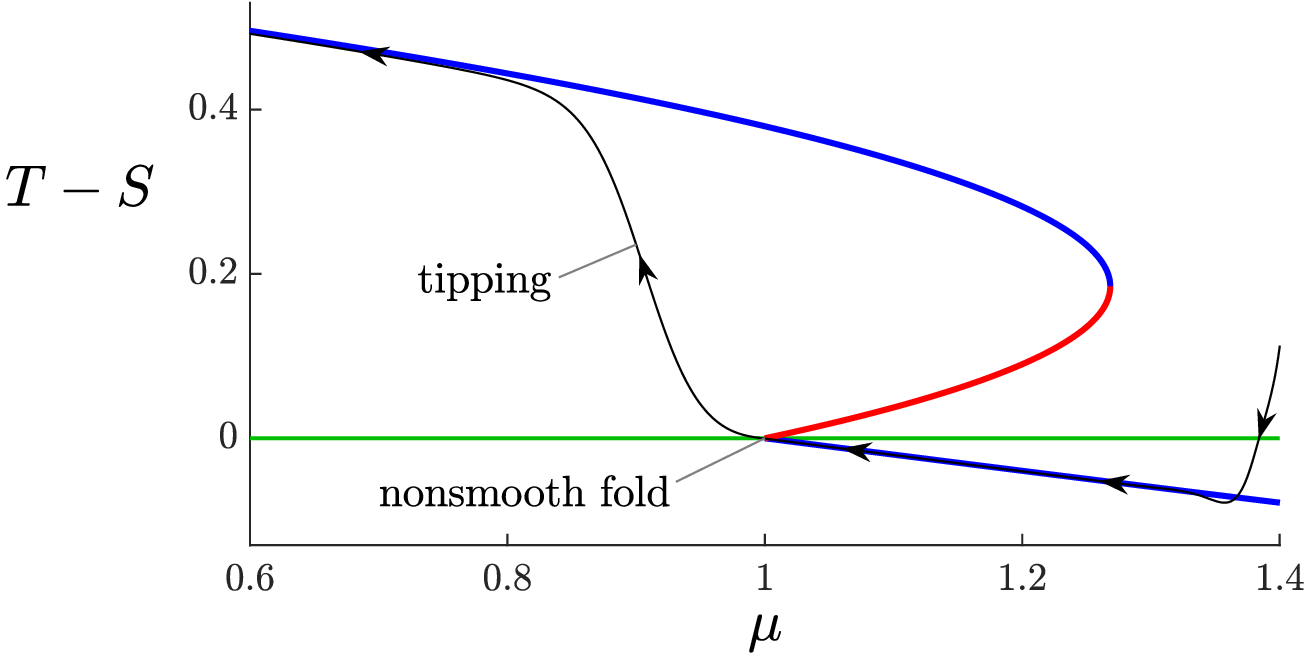}
\caption{
A bifurcation diagram of Stommel's ocean circulation model with $\alpha = 5$ and $\beta = 0.2$.
The blue and red curves are branches of stable and unstable equilibria.
The black solution uses $\dot{\mu} = -0.01$ and experiences a tipping point by passing the nonsmooth fold at $\mu = 1$.
\label{fig:trivStommel}
} 
\end{center}
\end{figure}

It should be stressed that in general the absence of an attractor only occurs locally.
For example in Fig.~\ref{fig:trivBEBc} the nonsmooth fold creates a large amplitude limit cycle.
This is analogous to a SNIC bifurcation for smooth systems. 
Also the occurrence of two virtual stable equilibria
can create stable oscillations \cite{MoBu21,WaWi16,WaWi20}.
For example Fig.~\ref{fig:trivLe18} uses again
\eqref{eq:oceanModel}, but now with $k(T,S) = 1$ if $-\alpha T + S > \ee$, and $k(T,S) = 0$ otherwise
(this is Welander's ocean circulation model \cite{We82} in the discontinuous limit \cite{Le18}).
Orbits move towards one equilibrium until crossing the switching manifold,
then move towards the other equilibrium until crossing the switching manifold, and so on, leading to a stable limit cycle.

\begin{figure}[b!]
\begin{center}
\includegraphics[width=10cm]{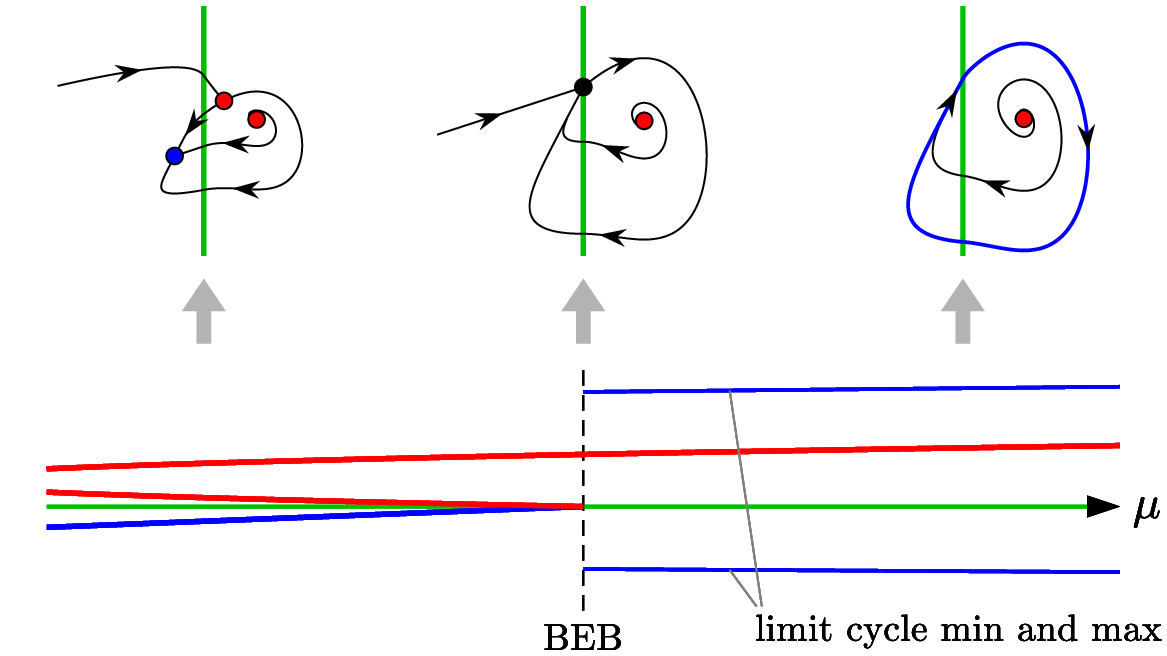}
\caption{
A bifurcation diagram and sample phase portraits of a piecewise-smooth continuous ODE system
that experiences a nonsmooth fold BEB as a parameter $\mu$ is varied.
The bifurcation creates a large amplitude limit cycle due to global features of the dynamics.
\label{fig:trivBEBc}
} 
\end{center}
\end{figure}

\begin{figure}[b!]
\begin{center}
\includegraphics[width=10cm]{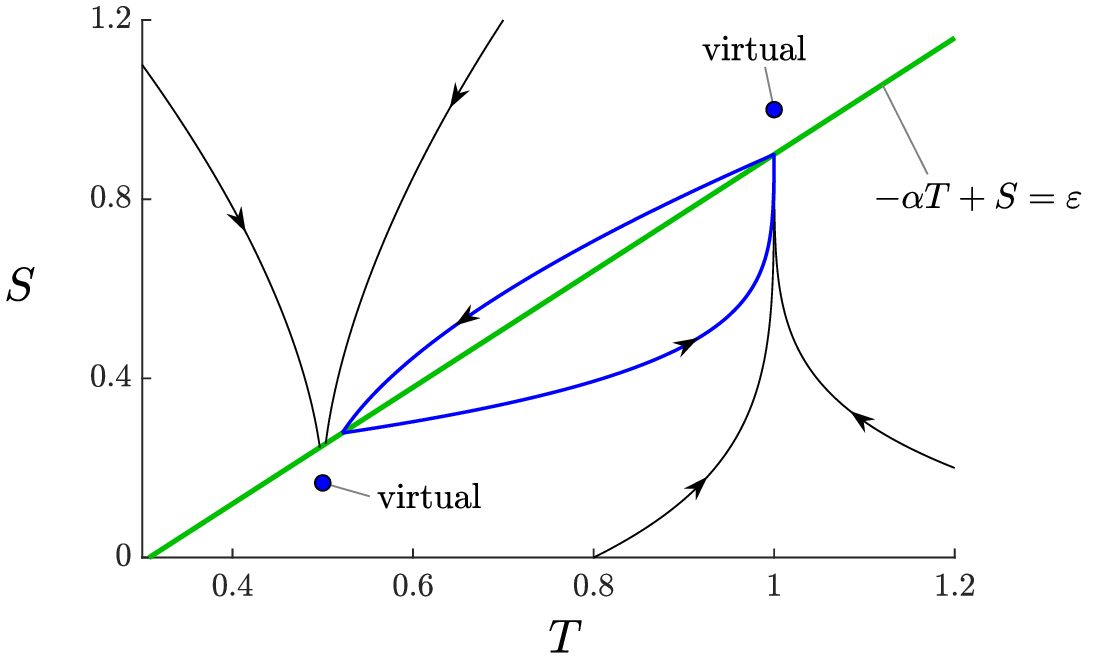}
\caption{
A phase portrait of Welander's ocean circulation model
with $\alpha = 1.3$, $\beta = 0.2$, $\ee = -0.4$, and $\mu = 1$.
There are two virtual equilibria and a stable limit cycle (blue).
\label{fig:trivLe18}
} 
\end{center}
\end{figure}

The results of this paper were inspired by a 2002 paper of Carmona {\em et al.}~\cite{CaFr02}.
Their work provided canonical forms for piecewise-linear ODEs arising in control circuitry.
As a side note (their Proposition 19) they proved that if their two-piece canonical form has no admissible equilibria then it has no periodic solutions.
We generalise their result beyond the canonical form,
from periodic solutions to arbitrary bounded invariant sets,
place the result in the context of BEBs,
and extend the result to Filippov systems, hybrid systems, and maps.

The remainder of the paper is organised as follows.
In \S\ref{sec:adjugate} we describe adjugate matrices
as these are central to our main proofs.
In \S\ref{sec:maps} we prove the result for piecewise-linear maps,
then in \S\ref{sec:hots} explore the influence of higher order terms.

In \S\ref{sec:cont} we treat piecewise-smooth continuous ODEs for which the result follows similarly to the map case.
In \S\ref{sec:Filippov} we treat piecewise-smooth discontinuous ODEs
using Filippov's convention, as is standard \cite{DiBu08,Fi88,Je18b}, to define evolution on switching manifolds.
Then in \S\ref{sec:hybrid} we treat hybrid systems.
Specifically we consider hybrid models of impacting systems where the reset law models impact events.
This is because such models have constraints that mean BEBs occur in a local fashion
and correspond to either persistence or a nonsmooth fold \cite{DiNo08}.
Finally \S\ref{sec:conc} presents a summary and discusses
consequences to global bifurcations of parameterised families of truncated forms.

\section{Adjugate matrices}
\label{sec:adjugate}

Here we define adjugate matrices and briefly describe some of their properties.
Further discussion can be found in standard textbooks \cite{Be92,Ko96}.
Throughout the paper $I$ denotes the $n \times n$ identity matrix
and $e_1$ denotes the first standard basis vector of $\mathbb{R}^n$.

\begin{definition}
The {\em adjugate} of a matrix $A \in \mathbb{R}^{n \times n}$
is the $n \times n$ matrix defined by ${\rm adj}(A)_{ij} = (-1)^{i+j} m_{ji}$,
where $m_{ji}$ is the determinant of the $(n-1) \times (n-1)$ matrix obtained by
removing the $j^{\rm th}$ row and $i^{\rm th}$ column from $A$.
\label{df:adjugate}
\end{definition}

The key property of the adjugate matrix is that for any $A \in \mathbb{R}^{n \times n}$,
\begin{equation}
A \,{\rm adj}(A) = {\rm adj}(A) A = \det(A) I.
\label{eq:adjugateIdentity}
\end{equation}
So if $A$ is invertible,
\begin{equation}
A^{-1} = \frac{{\rm adj}(A)}{\det(A)}.
\label{eq:inverse}
\end{equation}

The first row of ${\rm adj}(A)$ contains the values $(-1)^{i+j} m_{j1}$
which are independent of the entries in the first column of $A$.
Thus if two $n \times n$ matrices differ only in their first columns,
then the first rows of their adjugates will be identical.
Algebraically this means
\begin{equation}
e_1^{\sf T} {\rm adj}(A) = e_1^{\sf T} {\rm adj} \left( A + c e_1^{\sf T} \right)
\label{eq:adjFirstRow}
\end{equation}
for any $A \in \mathbb{R}^{n \times n}$ and $c \in \mathbb{R}^n$.

\section{Maps}
\label{sec:maps}

Let $f$ be a piecewise-$C^1$ continuous map with vector variable $x = (x_1,\ldots,x_n) \in \mathbb{R}^n$
and scalar parameter $\mu \in \mathbb{R}$.
Suppose $f$ has one switching manifold and coordinates
are chosen so that this manifold is $x_1 = 0$.
Then $f$ has the form
\begin{equation}
f(x;\mu) = \begin{cases}
f^L(x;\mu), & x_1 \le 0, \\
f^R(x;\mu), & x_1 \ge 0,
\end{cases}
\label{eq:mf}
\end{equation}
where $f^L$ and $f^R$ are $C^1$.

As $\mu$ is varied a border-collision bifurcation occurs when a
fixed point of $f^L$ or $f^R$ hits the switching manifold.
If $f$ is a Poincar\'e map of an piecewise-smooth ODE system,
its fixed points correspond to periodic orbits and the border-collision bifurcation corresponds to
a periodic orbit that interacts degenerately with a switching manifold of the ODEs \cite{DiBu08}.
Examples of this include grazing-sliding bifurcations \cite{DiKo02}, corner collisions \cite{DiBu01c}, and event collisions \cite{Si06}.

Suppose $f$ has a border-collision bifurcation at $x = 0$ when $\mu = 0$.
Then $f^L(0;0) = f^R(0;0) = 0$, so we can write
\begin{equation}
f(x;\mu) = \begin{cases}
A_L x + b \mu + E^L(x;\mu), & x_1 \le 0, \\
A_R x + b \mu + E^R(x;\mu), & x_1 \ge 0,
\end{cases}
\label{eq:mf2}
\end{equation}
where $A_L, A_R \in \mathbb{R}^{n \times n}$ differ only in their first columns (by continuity),
$b \in \mathbb{R}^n$, and $E^L(x;\mu)$ and $E^R(x;\mu)$ contain only higher order terms.
By dropping $E^L$ and $E^R$ we obtain the truncated form
\begin{equation}
g(x;\mu) = \begin{cases}
A_L x + b \mu, & x_1 \le 0, \\
A_R x + b \mu, & x_1 \ge 0,
\end{cases}
\label{eq:mg}
\end{equation}
which approximates $f$ in a neighbourhood of $(x;\mu) = (0;0)$.
In the remainder of this section we study the truncated form $g$;
the general map $f$ is revisited in the next section.

If $\det(I - A_L) \ne 0$ and $\det(I - A_R) \ne 0$, as is usually the case,
the pieces of $g$ have the unique fixed points
\begin{equation}
\begin{split}
x^L(\mu) &= (I - A_L)^{-1} b \mu, \\
x^R(\mu) &= (I - A_R)^{-1} b \mu.
\end{split}
\label{eq:mxLxR}
\end{equation}
The point $x^L(\mu)$ is admissible
if $x^L_1(\mu) < 0$ and virtual if $x^L_1(\mu) > 0$.
Similarly $x^R(\mu)$ is admissible if $x^R_1(\mu) > 0$ and virtual if $x^R_1(\mu) < 0$.
We can now state the main result for maps.

\begin{theorem}
If $\det(I - A_L) \ne 0$, $\det(I - A_R) \ne 0$, and both fixed points of \eqref{eq:mg} are virtual,
then every forward orbit of \eqref{eq:mg} diverges.
\label{th:m}
\end{theorem}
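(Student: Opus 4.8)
The plan is to find a linear functional $\phi(x) = w^{\sf T} x$ that increases by a fixed positive amount along every orbit of $g$, regardless of which piece is applied, so that $\phi$ grows without bound and the orbit cannot stay in a bounded set. The natural candidate for $w$ comes from the adjugate machinery of \S\ref{sec:adjugate}: since $A_L$ and $A_R$ differ only in their first columns, so do $I - A_L$ and $I - A_R$, and hence by \eqref{eq:adjFirstRow} the first rows of ${\rm adj}(I - A_L)$ and ${\rm adj}(I - A_R)$ coincide. Set $w^{\sf T} = e_1^{\sf T} {\rm adj}(I - A_L) = e_1^{\sf T} {\rm adj}(I - A_R)$. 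The point of this choice is the identity \eqref{eq:adjugateIdentity}: $e_1^{\sf T} {\rm adj}(I - A_L)(I - A_L) = \det(I - A_L)\, e_1^{\sf T}$, and similarly on the right. So for a point $x$ with $x_1 \le 0$,
\[
w^{\sf T}\bigl(g(x;\mu) - x\bigr) = w^{\sf T}(A_L - I)x + w^{\sf T} b \mu = -\det(I - A_L)\, x_1 + w^{\sf T} b \mu,
\]
and symmetrically $w^{\sf T}(g(x;\mu) - x) = -\det(I - A_R)\, x_1 + w^{\sf T} b \mu$ when $x_1 \ge 0$.

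Next I would exploit the hypothesis that both fixed points are virtual to pin down signs. From \eqref{eq:mxLxR}, $x^L_1(\mu) = e_1^{\sf T}(I - A_L)^{-1} b \mu = \frac{w^{\sf T} b \mu}{\det(I - A_L)}$ using \eqref{eq:inverse}, and likewise $x^R_1(\mu) = \frac{w^{\sf T} b \mu}{\det(I - A_R)}$. Virtuality of $x^L$ means $x^L_1(\mu) > 0$, i.e. $w^{\sf T} b \mu$ and $\det(I - A_L)$ have the same sign; virtuality of $x^R$ means $x^R_1(\mu) < 0$, i.e. $w^{\sf T} b \mu$ and $\det(I - A_R)$ have opposite signs. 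Writing $c = w^{\sf T} b \mu$, in the case $c > 0$ we get $\det(I - A_L) > 0$ and $\det(I - A_R) < 0$. Then for $x_1 \le 0$, $w^{\sf T}(g(x;\mu) - x) = -\det(I - A_L)\,x_1 + c \ge c > 0$, and for $x_1 \ge 0$, $w^{\sf T}(g(x;\mu) - x) = -\det(I - A_R)\,x_1 + c \ge c > 0$ as well. So along any orbit $\{x^{(k)}\}$, $w^{\sf T} x^{(k)} \ge w^{\sf T} x^{(0)} + k c \to \infty$, forcing $\|x^{(k)}\| \to \infty$. The case $c < 0$ is symmetric (both increments are $\le c < 0$, so $w^{\sf T} x^{(k)} \to -\infty$), and $c = 0$ would make $x^L_1 = x^R_1 = 0$, which is not virtual, so it is excluded by hypothesis.

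The main obstacle, and the step deserving the most care, is the sign bookkeeping at the switching manifold itself and the degenerate possibility that $w = 0$. One must check that on $x_1 = 0$ both formulas for the increment agree (they do: both give $c$) so the piecewise definition is harmless, and that the inequalities are oriented correctly — the product $(-\det(I-A_L))\cdot x_1$ is nonnegative precisely because $x_1 \le 0$ and $\det(I-A_L) > 0$, and one must verify the analogous statement on the right using $x_1 \ge 0$ and $\det(I-A_R) < 0$. The possibility $w = 0$ would sink the argument, but it cannot occur: if $e_1^{\sf T}{\rm adj}(I - A_L) = 0$ then $x^L_1(\mu) = 0$ for all $\mu$, contradicting that $x^L$ is virtual (which requires $x^L_1(\mu) \ne 0$). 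Everything else is the routine linear algebra of \eqref{eq:adjugateIdentity}–\eqref{eq:inverse}, so once the sign analysis is set up the conclusion is immediate.
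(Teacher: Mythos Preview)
Your proposal is correct and follows essentially the same argument as the paper: the paper's proof defines $p^{\sf T} = e_1^{\sf T}\,{\rm adj}(I-A_L)$ and $s = p^{\sf T} b\mu$ (your $w$ and $c$), derives the same identity $p^{\sf T}(g(x;\mu)-x) = -\det(I-A_{L/R})\,x_1 + s$, and uses the virtuality hypotheses to fix the signs so that the increment is at least $s$ in both regions. Your extra remarks on the consistency at $x_1=0$ and the impossibility $w=0$ are sound but not needed for the core argument.
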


Below we prove Theorem \ref{th:m} by showing there exists $s > 0$ and $p \in \mathbb{R}^n$ such that
\begin{equation}
p^{\sf T} g(x;\mu) \ge p^{\sf T} x + s,
\label{eq:mInequality}
\end{equation}
for all $x \in \mathbb{R}^n$.
This shows that the value of $p^{\sf T} x$ increases without bound along forward orbits of $g$,
hence all forward orbits diverge.

As an example for which this can be seen graphically, consider \eqref{eq:mg} with
\begin{equation}
\begin{split}
A_L &= \begin{bmatrix} \delta_L + 1 - \alpha & 1 \\ -\delta_L & 0 \end{bmatrix}, \\
A_R &= \begin{bmatrix} \delta_R + 1 + \alpha & 1 \\ -\delta_R & 0 \end{bmatrix}, \\
b &= \begin{bmatrix} 1 \\ 0 \end{bmatrix},
\end{split}
\label{eq:example}
\end{equation}
where $\delta_L, \delta_R \in \mathbb{R}$ and $\alpha > 0$ are additional parameters.
The fixed points are
\begin{equation}
\begin{split}
x^L(\mu) &= \left( \frac{\mu}{\alpha}, \frac{-\delta_L \mu}{\alpha} \right), \\
x^R(\mu) &= \left( \frac{-\mu}{\alpha}, \frac{\delta_R \mu}{\alpha} \right),
\end{split}
\nonumber
\end{equation}
so are both virtual when $\mu > 0$.
The inequality \eqref{eq:mInequality} holds for all $x \in \mathbb{R}^n$ using
$s = \mu$ and $p^{\sf T} = \begin{bmatrix} 1 & 1 \end{bmatrix}$
(obtained by applying the formulas below to this example).
Fig.~\ref{fig:trivqqDivergence} uses $\mu = 1$
and shades regions between integer values of $p^{\sf T} x$.
The inequality \eqref{eq:mInequality} implies that no two points of an orbit of $g$
belong to the same region, and this is evident for the sample orbit shown.

\begin{figure}[b!]
\begin{center}
\includegraphics[width=10cm]{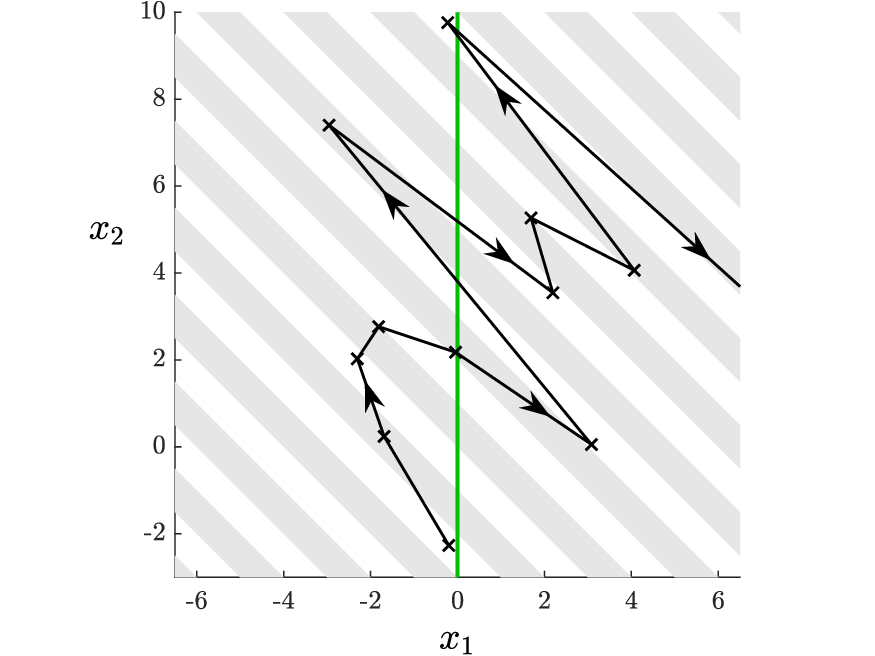}
\caption{
A forward orbit of the piecewise-linear map \eqref{eq:mg}
with \eqref{eq:example} and $\delta_L = 1.2$, $\delta_R = -2.4$, $\alpha = 0.1$, and $\mu = 1$.
The shaded regions are bounded by integer values of $p^{\sf T} x = x_1 + x_2$.
\label{fig:trivqqDivergence}
} 
\end{center}
\end{figure}

\begin{proof}[Proof of Theorem \ref{th:m}]
By \eqref{eq:inverse} applied to \eqref{eq:mxLxR},
\begin{equation}
x^L_1(\mu) = \frac{p^{\sf T} b \mu}{\det(I - A_L)},
\label{eq:mxL1}
\end{equation}
where we let
\begin{equation}
p^{\sf T} = e_1^{\sf T} {\rm adj}(I - A_L)
\nonumber
\end{equation}
denote the first row of ${\rm adj}(I - A_L)$.
By \eqref{eq:adjFirstRow} this is also the first row of ${\rm adj}(I - A_R)$, so
\begin{equation}
x^R_1(\mu) = \frac{p^{\sf T} b \mu}{\det(I - A_R)}.
\label{eq:mxR1}
\end{equation}
Let
\begin{equation}
s = p^{\sf T} b \mu.
\nonumber
\end{equation}
Notice $s \ne 0$, for otherwise the fixed points are not virtual.
Without loss of generality suppose $s > 0$.
Then $\det(I - A_L) > 0$ and $\det(I - A_R) < 0$ because the fixed points are virtual.

For any $x \in \mathbb{R}^n$ with $x_1 \le 0$,
\begin{align*}
p^{\sf T} g(x;\mu)
&= p^{\sf T} (A_L x + b \mu) \\
&= p^{\sf T} (-x + A_L x + x + b \mu) \\
&= -p^{\sf T} (I - A_L) x + p^{\sf T} x + s.
\end{align*}
By \eqref{eq:adjugateIdentity} we have $p^{\sf T} (I - A_L) x = \det(I - A_L) x_1$.
Importantly $\det(I - A_L) > 0$ and $x_1 \le 0$, hence
\begin{equation}
p^{\sf T} g(x;\mu) \ge p^{\sf T} x + s.
\label{eq:mIncrease}
\end{equation}
Similarly for any $x \in \mathbb{R}^n$ with $x_1 \ge 0$,
\begin{equation}
p^{\sf T} g(x;\mu) = -p^{\sf T} (I - A_R) x + p^{\sf T} x + s,
\nonumber
\end{equation}
where $p^{\sf T} (I - A_R) x = \det(I - A_R) x_1 \le 0$ because $\det(I - A_R) < 0$,
so again we have \eqref{eq:mIncrease}.
Thus for any forward orbit of $g$,
the value of $p^{\sf T} x$ increases by at least $s > 0$ every iteration, so the orbit diverges.
\end{proof}

\section{The effect of higher order terms}
\label{sec:hots}

The quantities $E^L$ and $E^R$ in \eqref{eq:mf2} that were omitted to produce the truncated form \eqref{eq:mg} are higher order.
To be precise, $\frac{\| E^L(x;\mu) \|}{\| x \| + |\mu|} \to 0$
and $\frac{\| E^R(x;\mu) \|}{\| x \| + |\mu|} \to 0$ as $(x;\mu) \to (0;0)$,
where $\| \cdot \|$ denotes the Euclidean norm on $\mathbb{R}^n$.
For this reason if \eqref{eq:mg} satisfies the conditions of Theorem \ref{th:m},
we expect \eqref{eq:mf2} has no local invariant set on one side of the nonsmooth fold.
The following conjecture formalises this claim.
Without loss of generality we suppose $p^{\sf T} b > 0$ and consider $\mu > 0$
in which case $\det(I - A_L) > 0$ and $\det(I - A_R) < 0$ are needed
for both fixed points to be virtual by \eqref{eq:mxL1} and \eqref{eq:mxR1}.
We write $B_\eta = \left\{ x \in \mathbb{R}^n \,\big|\, \| x \| \le \eta \right\}$
for the closed ball of radius $\eta > 0$ centred at the origin.

\begin{conjecture}
Let $f$ be a map of the form \eqref{eq:mf2} with $\det(I - A_L) > 0$, $\det(I - A_R) < 0$, and $p^{\sf T} b > 0$.
Then there exists $\mu_0 > 0$ and $\eta > 0$ such that
for all $\mu \in (0,\mu_0)$ and all $x \in B_\eta$
there exists $m \ge 1$ such that $\| f^m(x;\mu) \| > \eta$.
\label{cj:hots}
\end{conjecture}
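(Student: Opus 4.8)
The plan is to treat $f$ as a higher-order perturbation of the truncated map $g$ and push the monotonicity argument behind Theorem~\ref{th:m} through with an error term. First I would fix $\epsilon>0$ and use the higher-order hypothesis to choose $\delta>0$ with $\|E^L(x;\mu)\|,\|E^R(x;\mu)\|\le\epsilon(\|x\|+|\mu|)$ whenever $\|x\|+|\mu|\le\delta$, then take $\eta,\mu_0>0$ with $\eta+\mu_0<\delta$ so that this bound holds throughout $B_\eta$ for every $\mu\in(0,\mu_0)$. Suppose, for a contradiction, that some forward orbit $x_0,x_1,x_2,\ldots$ of $f(\cdot;\mu)$ remains in $B_\eta$. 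Recycling the computation in the proof of Theorem~\ref{th:m}, with $p^{\sf T}=e_1^{\sf T}{\rm adj}(I-A_L)=e_1^{\sf T}{\rm adj}(I-A_R)$ (equal by \eqref{eq:adjFirstRow}) and using \eqref{eq:adjugateIdentity}, one gets for each $m$, writing $A_{L/R}$ and $E^{L/R}$ for whichever applies according to the sign of $(x_m)_1$,
\[
p^{\sf T}x_{m+1}-p^{\sf T}x_m \;=\; p^{\sf T}b\,\mu \;-\;\det(I-A_{L/R})\,(x_m)_1\;+\;p^{\sf T}E^{L/R}(x_m;\mu),
\]
in which the middle term is nonnegative because $\det(I-A_L)>0$ on $x_1\le0$ and $\det(I-A_R)<0$ on $x_1\ge0$. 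Hence $p^{\sf T}x_{m+1}-p^{\sf T}x_m\ge p^{\sf T}b\,\mu-\|p\|\epsilon(\eta+\mu)$.

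For this lower bound to be positive, and so make $p^{\sf T}x_m$ unbounded and contradict $x_m\in B_\eta$, one needs $\|p\|\epsilon\,\eta<(p^{\sf T}b-\|p\|\epsilon)\mu$, which with $\epsilon$ and $\eta$ fixed fails for all small $\mu$. This is the main obstacle: the divergence rate $p^{\sf T}b\,\mu$ of the truncated map is linear in $\mu$, while near $\partial B_\eta$ the omitted terms are of size $\epsilon\eta$, independent of $\mu$. Equivalently, under the rescaling $x=\mu y$ the map $g(\cdot;\mu)$ becomes $\mu$-independent while $B_\eta$ turns into the ball of radius $\eta/\mu\to\infty$; a truncated orbit takes $\Theta(1/\mu)$ iterations to leave that ball, over which the accumulated discrepancy between the orbits of $f$ and $g$ grows without bound as $\mu\to0$. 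So a naive perturbation argument cannot close the gap.

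To get past this I would exploit the identity more fully. Summing it over $m=0,\ldots,M-1$ and using $p^{\sf T}x_M-p^{\sf T}x_0\le2\|p\|\eta$ for a bounded orbit yields
\[
\sum_{m=0}^{M-1}\Bigl(\|p\|\epsilon\,\|x_m\|-\bigl|\det(I-A_{L/R})\bigr|\,\bigl|(x_m)_1\bigr|\Bigr)\;\ge\;M\,\mu\bigl(p^{\sf T}b-\|p\|\epsilon\bigr)-2\|p\|\eta,
\]
which grows linearly in $M$ once $\epsilon<p^{\sf T}b/\|p\|$. A bounded orbit must therefore spend a positive fraction of its iterates inside a thin cone $\{\,|x_1|\le c\|x\|\,\}$ about the switching manifold (with $\|x_m\|$ bounded below by a fixed multiple of $\mu$), where the first-coordinate drift powering Theorem~\ref{th:m} is negligible. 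The remaining and genuinely hard step would be an independent estimate near the switching manifold — using that $f$ is continuous across $x_1=0$, so there it is a single $C^1$ map approximated by the part of $A_L$ and $A_R$ that they share, together with the sign conditions $\det(I-A_L)>0>\det(I-A_R)$ — showing the orbit cannot stay confined to this cone and must repeatedly return to the core $\{\|x\|\le R\mu\}$, on which one checks directly that $p^{\sf T}x$ increases by at least $\tfrac12 p^{\sf T}b\,\mu$ per step for $\epsilon<p^{\sf T}b/(2\|p\|)$ and $R=\tfrac{p^{\sf T}b}{2\|p\|\epsilon}-1$. Balancing the boosts gained on those returns against the bounded total variation of $p^{\sf T}x$ would give the contradiction. (A degree- or index-theoretic route looks unpromising, since piecewise-linear maps are not structurally stable.) Making the no-confinement step rigorous, with no restriction on $n$ or on the eigenvalues of $A_L$ and $A_R$, is precisely what leaves the statement conjectural.
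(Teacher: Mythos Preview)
The paper does not prove Conjecture~\ref{cj:hots}; it is stated as an open problem, and the surrounding text says explicitly that it ``remains for future work to show that such a function $\Phi$ can always be constructed, prove Conjecture~\ref{cj:hots} in a different way, or find a counter-example.'' So there is no paper proof to compare against at the level of generality you are attempting.

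Your diagnosis of the obstruction is exactly right and matches the paper's own discussion. The paper makes the same point with the concrete map \eqref{eq:mapExampleHOTs}: it shows that at $x=(0,\eta)$ the difference $p^{\sf T}f(x;\mu)-p^{\sf T}x=\mu-\eta^2$ is negative for small $\mu$, so the linear functional $p^{\sf T}x$ cannot serve as a Lyapunov-type function on all of $B_\eta$. What the paper then does is far less ambitious than your outline: it verifies the conjecture only for that single example (Proposition~\ref{pr:exampleHOTs} in Appendix~\ref{app:hots}) by replacing $p^{\sf T}x$ with the hand-built nonlinear function $\Phi(x)=x_1+x_2-2x_2^2$, chosen so that the quadratic correction absorbs the $-x_2^2$ nonlinearity and restores $\Phi(f(x;\mu))-\Phi(x)\ge\mu$ throughout a small ball. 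This is an ad hoc construction tied to the specific higher-order terms, not a general method.

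Your proposed route --- summing the increment identity, extracting a cone-confinement consequence, and then arguing that the orbit cannot remain in a thin cone about the switching manifold --- is a genuinely different strategy aimed at the full conjecture. The first two steps are sound bookkeeping; the ``no-confinement'' step is, as you yourself note, the real content and is not supplied. That honest assessment is appropriate: the paper leaves the general statement open, so an incomplete sketch is the correct status here, and the gap you identify is precisely the one the paper flags as unresolved.
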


That is, $f$ has no invariant set in $B_\eta$ for all $\mu \in (0,\mu_0)$.
To illustrate Conjecture \ref{cj:hots} we add the quadratic terms
$E^L(x;\mu) = E^R(x;\mu) = \begin{bmatrix} 0 \\ -x_2^2 \end{bmatrix}$
to our previous example of \eqref{eq:mg} with \eqref{eq:example} to obtain the map
\begin{equation}
f(x;\mu) =
\begin{cases}
\begin{bmatrix} (\delta_L + 1 - \alpha) x_1 + x_2 + \mu \\ -\delta_L x_1 - x_2^2 \end{bmatrix}, & x_1 \le 0, \\[4mm]
\begin{bmatrix} (\delta_R + 1 + \alpha) x_1 + x_2 + \mu \\ -\delta_R x_1 - x_2^2 \end{bmatrix}, & x_1 \ge 0.
\end{cases}
\label{eq:mapExampleHOTs}
\end{equation}
Conjecture \ref{cj:hots} is true for this map with the parameter values of Fig.~\ref{fig:trivqqDivergence},
but to prove this it is insufficient to use the linear function $p^{\sf T} x = x_1 + x_2$.
This is because
\begin{equation}
p^{\sf T} f(x;\mu) - p^{\sf T} x = \begin{cases}
\mu - \alpha x_1 - x_2^2 \,, & x_1 \le 0, \\
\mu + \alpha x_1 - x_2^2 \,, & x_1 \ge 0,
\end{cases}
\nonumber
\end{equation}
so for any $\eta > 0$ and $\mu \in \left( 0, \eta^2 \right)$,
at the point $x = (0,\eta) \in B_\eta$ the difference
$p^{\sf T} f(x;\mu) - p^{\sf T} x = \mu - \eta^2$ is negative.
To prove Conjecture \ref{cj:hots} for \eqref{eq:mapExampleHOTs} 
we can instead use $\Phi(x) = x_1 + x_2 - 2 x_2^2$, as shown in Appendix \ref{app:hots}.
It remains for future work to show that such a function $\Phi$ can always be constructed,
prove Conjecture \ref{cj:hots} in different way,
or find a counter-example.

\section{Piecewise-smooth continuous ODEs}
\label{sec:cont}

We now consider BEBs of piecewise-continuous ODEs.
Our main result is a continuous-time analogue of Theorem \ref{th:m}.

Consider $n$-dimensional ODE systems of the form
\begin{equation}
\dot{x} = \begin{cases}
f^L(x;\mu), & x_1 \le 0, \\
f^R(x;\mu), & x_1 \ge 0,
\end{cases}
\label{eq:cfGen}
\end{equation}
where $f^L$ and $f^R$ are $C^1$ and $f^L(x;\mu) = f^R(x;\mu)$
at all points on the switching manifold $x_1 = 0$.
Suppose \eqref{eq:cfGen} has a BEB at $x = 0$ when $\mu = 0$.
Then by replacing $f^L$ and $f^R$ with their linearisations we obtain
\begin{equation}
\dot{x} = \begin{cases}
A_L x + b \mu, & x_1 \le 0, \\
A_R x + b \mu, & x_1 \ge 0,
\end{cases}
\label{eq:cf}
\end{equation}
where $A_L, A_R \in \mathbb{R}^{n \times n}$ differ in only their first columns (by continuity)
and $b \in \mathbb{R}^n$.
Solutions to \eqref{eq:cf} are well-defined for all $x \in \mathbb{R}^n$ and $t \in \mathbb{R}$
because \eqref{eq:cf} is Lipschitz and each piece is linear.

If $\det(A_L) \ne 0$ and $\det(A_R) \ne 0$ the pieces of \eqref{eq:cf} have the unique equilibria
\begin{equation}
\begin{split}
x^L(\mu) &= -A_L^{-1} b \mu, \\
x^R(\mu) &= -A_R^{-1} b \mu.
\label{eq:cxLxR}
\end{split}
\end{equation}
The equilibrium $x^L(\mu)$ is admissible if $x^L_1(\mu) < 0$ and virtual if $x^L_1(\mu) > 0$,
while $x^R(\mu)$ is admissible if $x^R_1(\mu) > 0$ and virtual if $x^R_1(\mu) < 0$.

\begin{theorem}
If $\det(A_L) \ne 0$, $\det(A_R) \ne 0$, and both equilibria of \eqref{eq:cf} are virtual,
then every solution to \eqref{eq:cf} diverges as $t \to \pm \infty$.
\label{th:c}
\end{theorem}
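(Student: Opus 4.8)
The plan is to mimic the proof of Theorem~\ref{th:m}, replacing the discrete increment of $p^{\sf T}x$ along an orbit with a strictly positive lower bound on the derivative $\frac{d}{dt}\left(p^{\sf T}x(t)\right)$ along solutions. First I would set $p^{\sf T} = e_1^{\sf T}\,{\rm adj}(A_L)$, which by \eqref{eq:adjFirstRow} (applied with $A_L = A_R + c e_1^{\sf T}$, noting that $A_L$ and $A_R$ differ only in their first columns) equals $e_1^{\sf T}\,{\rm adj}(A_R)$ as well. By \eqref{eq:inverse} applied to \eqref{eq:cxLxR} we get $x^L_1(\mu) = -\frac{p^{\sf T} b \mu}{\det(A_L)}$ and $x^R_1(\mu) = -\frac{p^{\sf T} b \mu}{\det(A_R)}$. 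Set $s = -p^{\sf T} b \mu$. As in the map case, $s \ne 0$ (else the equilibria would lie on the switching manifold and not be virtual), so without loss of generality $s > 0$; virtuality of $x^L$ then forces $\det(A_L) > 0$ and virtuality of $x^R$ forces $\det(A_R) < 0$ (or the reverse sign convention, handled symmetrically).

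Next I would compute, for $x$ in the left half-space $x_1 \le 0$, using the identity $p^{\sf T} A_L = \det(A_L)\, e_1^{\sf T}$ from \eqref{eq:adjugateIdentity},
\begin{equation}
\frac{d}{dt}\left(p^{\sf T} x\right) = p^{\sf T}(A_L x + b\mu) = \det(A_L)\, x_1 + p^{\sf T} b \mu = \det(A_L)\, x_1 - s \le -s < 0,
\nonumber
\end{equation}
since $\det(A_L) > 0$ and $x_1 \le 0$. Symmetrically, for $x$ in the right half-space $x_1 \ge 0$ we have $p^{\sf T} A_R = \det(A_R)\, e_1^{\sf T}$, so $\frac{d}{dt}\left(p^{\sf T} x\right) = \det(A_R)\, x_1 - s \le -s < 0$ because $\det(A_R) < 0$ and $x_1 \ge 0$. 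Thus along any solution the scalar quantity $\phi(t) = p^{\sf T} x(t)$ satisfies $\dot\phi(t) \le -s$ for all $t$, hence $\phi(t) \le \phi(0) - s t$, which tends to $-\infty$ as $t \to +\infty$ and to $+\infty$ as $t \to -\infty$; either way $\|x(t)\| \to \infty$, so the solution diverges as $t \to \pm\infty$.

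The one genuine subtlety — and the step I expect to be the main obstacle — is justifying that this pointwise derivative bound is valid along the actual (Filippov/Carathéodory or classical) solutions of \eqref{eq:cf} even at times when $x(t)$ lies on the switching manifold $x_1 = 0$. Because the right-hand side of \eqref{eq:cf} is continuous across $x_1 = 0$ (the two linearisations agree there by hypothesis), the vector field is globally Lipschitz and solutions are $C^1$, so $\phi$ is $C^1$ and the bound $\dot\phi(t) \le -s$ holds at every $t$: on the manifold both formulas give $\det(A_L)\cdot 0 - s = \det(A_R)\cdot 0 - s = -s$, consistently. I would spell this continuity-of-the-vector-field point out explicitly, since it is precisely what makes the argument work and is the continuous-ODE analogue of the continuity constraint $A_L, A_R$ differing only in their first columns. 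One should also remark that divergence as $t \to -\infty$ uses the backward-completeness of solutions, which again follows from global Lipschitzness of the piecewise-linear field.
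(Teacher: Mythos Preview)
Your proof is correct and follows essentially the same approach as the paper's: both use the first row of ${\rm adj}(A_L)$ (which equals the first row of ${\rm adj}(A_R)$) as a linear functional whose derivative along solutions is bounded away from zero, forcing divergence. The only differences are cosmetic---you define $s = -p^{\sf T} b\mu$ whereas the paper takes $s = q^{\sf T} b\mu$, which flips the sign conventions on the determinants and gives $\dot\phi \le -s$ rather than $\dot\phi \ge s$---and you add a careful discussion of regularity on the switching manifold that the paper handles in one sentence.
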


\begin{proof}
By \eqref{eq:inverse} applied to \eqref{eq:cxLxR},
\begin{equation}
x^L_1(\mu) = -\frac{q^{\sf T} b \mu}{\det(A_L)},
\label{eq:cxL1}
\end{equation}
where we let
\begin{equation}
q^{\sf T} = e_1^{\sf T} {\rm adj}(A_L).
\nonumber
\end{equation}
By \eqref{eq:adjFirstRow} this is also the first row of ${\rm adj}(A_R)$, so
\begin{equation}
x^R_1(\mu) = -\frac{q^{\sf T} b \mu}{\det(A_R)}.
\label{eq:cxR1}
\end{equation}
The value
\begin{equation}
s = q^{\sf T} b \mu
\nonumber
\end{equation}
cannot be zero because the equilibria are virtual.
Without loss of generality suppose $s > 0$;
then $\det(A_L) < 0$ and $\det(A_R) > 0$.

For any $x \in \mathbb{R}^n$ with $x_1 \le 0$,
\begin{equation}
q^{\sf T} f^L(x;\mu)
= q^{\sf T} (A_L x + b \mu)
= q^{\sf T} A_L x + s.
\nonumber
\end{equation}
Notice $q^{\sf T} A_L x = \det(A_L) x_1 \ge 0$, thus
\begin{equation}
q^{\sf T} \dot{x} \ge s.
\label{eq:cIncrease}
\end{equation}
Similarly for any $x \in \mathbb{R}^n$ with $x_1 \ge 0$,
\begin{equation}
q^{\sf T} f^R(x;\mu)
= q^{\sf T} (A_R x + b \mu)
= q^{\sf T} A_R x + s,
\nonumber
\end{equation}
where $q^{\sf T} A_R x = \det(A_R) x_1 \ge 0$, so again we have \eqref{eq:cIncrease}.
Thus any solution $\phi(t)$ to \eqref{eq:cf} has
$\frac{d}{d t} q^{\sf T} \phi(t) \ge s > 0$ for all $t \in \mathbb{R}$,
and so $\| \phi(t) \| \to \infty$ as $t \to \pm \infty$.
\end{proof}

\section{Filippov systems}
\label{sec:Filippov}

Here we treat systems
\begin{equation}
\dot{x} = \begin{cases}
f^L(x;\mu), & x_1 < 0, \\
f^R(x;\mu), & x_1 > 0,
\end{cases}
\label{eq:ffGen}
\end{equation}
where again $f^L$ and $f^R$ are $C^1$,
but now \eqref{eq:ffGen} is not necessarily continuous on
the switching manifold $\Sigma = \left\{ x \in \mathbb{R}^n \,\middle|\, x_1 = 0 \right\}$.
To describe the dynamics of \eqref{eq:ffGen}
it is useful to partition $\Sigma$
into regions throughout which $f^L$ and $f^R$ each either direct solutions into $\Sigma$ or away from $\Sigma$:

\begin{definition}
Consider a system \eqref{eq:ffGen} with fixed $\mu \in \mathbb{R}$.
A subset $S \subset \Sigma$ is a
\begin{enumerate}
\item
a {\em crossing region} if $f^L_1(x;\mu) f^R_1(x;\mu) > 0$ for all $x \in S$, and
\item
a {\em sliding region} if $f^L_1(x;\mu) f^R_1(x;\mu) < 0$ for all $x \in S$.
\end{enumerate}
A sliding region is {\em attracting} if $f^L_1(x;\mu) > 0$ and $f^R_1(x;\mu) < 0$,
and {\em repelling} if $f^L_1(x;\mu) < 0$ and $f^R_1(x;\mu) > 0$.
\label{df:fRegions}
\end{definition}

For example in Fig.~\ref{fig:trivFilippovSchem} the top half
of $\Sigma$ is an attracting sliding region,
while the bottom half of $\Sigma$ is a crossing region.

\begin{figure}[b!]
\begin{center}
\includegraphics[width=10cm]{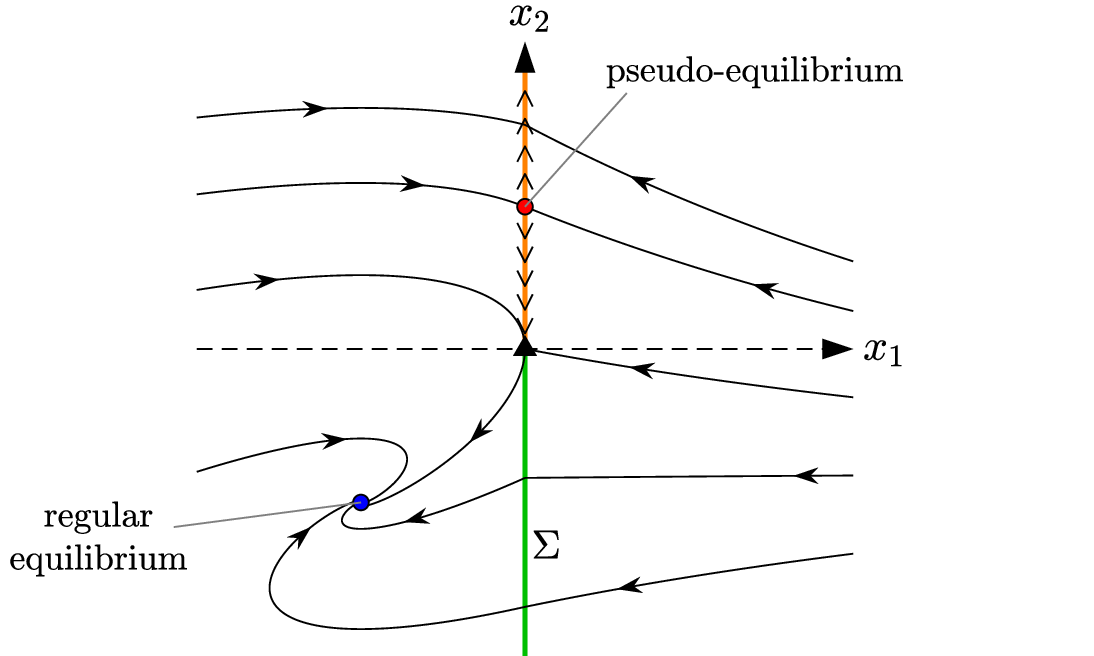}
\caption{
A schematic phase portrait of a two-dimensional Filippov system of the form \eqref{eq:ffGen}.
\label{fig:trivFilippovSchem}
} 
\end{center}
\end{figure}

To define solutions on sliding regions we use Filippov's convention \cite{Fi60,Fi88}.
That is, solutions on sliding regions evolve according to $\dot{x} = f^S(x;\mu)$,
where $f^S$ is the {\em sliding vector field} 
\begin{equation}
f^S(x;\mu) = \frac{f^L_1(x;\mu) f^R(x;\mu) - f^R_1(x;\mu) f^L(x;\mu)}{f^L_1(x;\mu) - f^R_1(x;\mu)},
\label{eq:ffS}
\end{equation}
defined as the unique convex combination of $f^L$ and $f^R$ that is tangent to $\Sigma$.
With this convention \eqref{eq:ffGen} is termed a {\em Filippov system}.
Solutions are termed {\em Filippov solutions} and are concatenations of
smooth segments of motion in $x_1 < 0$ under $f^L$,
in $x_1 > 0$ under $f^R$,
and on sliding regions under $f^S$.

Equilibria of $f^L$ and $f^R$ are admissible or virtual as in the previous section.
They are sometimes called {\em regular equilibria} to distinguish them from equilibria of $f^S$:

\begin{definition}
A point $x \in \Sigma$ is a {\em pseudo-equilibrium} of \eqref{eq:ffGen} if $f^S(x;\mu) = 0$.
It is {\em admissible} if it belongs to a sliding region, and {\em virtual} if it belongs to a crossing region.
\label{df:fPsEq}
\end{definition}

Now suppose \eqref{eq:ffGen} has a BEB
caused by an equilibrium of $f^L$ hitting $\Sigma$ at $x = 0$ when $\mu = 0$.
Then $f^L(0;0) = 0$, but unlike in the previous section typically $f^R(0;0) \ne 0$.
By replacing $f^L(x;\mu)$ with its linearisation about $(x;\mu) = (0;0)$
and $f^R(x;\mu)$ with its value at $(x;\mu) = (0;0)$, we obtain the truncated form
\begin{equation}
\dot{x} = \begin{cases}
A x + b \mu, & x_1 < 0, \\
c, & x_1 > 0,
\end{cases}
\label{eq:ff}
\end{equation}
where $A \in \mathbb{R}^{n \times n}$ and $b, c \in \mathbb{R}^n$.
If $\det(A) \ne 0$ then $f^L$ has the unique equilibrium
\begin{equation}
x^L(\mu) = -A^{-1} b \mu,
\label{eq:fxL}
\end{equation}
which is admissible if $x^L_1 < 0$ and virtual if $x^L_1 > 0$.
If $c_1 \ne 0$ and $q^{\sf T} c \ne 0$, where again
\begin{equation}
q^{\sf T} = e_1^{\sf T} {\rm adj}(A),
\nonumber
\end{equation}
then \eqref{eq:ff} has a unique pseudo-equilibrium $x^S(\mu)$ with $x^S(0) = 0$ \cite{DiNo08,Si18d}.

\begin{theorem}
If $\det(A) \ne 0$, $c_1 \ne 0$, $q^{\sf T} c \ne 0$,
and $x^L(\mu)$ and $x^S(\mu)$ are virtual, then every Filippov solution to \eqref{eq:ff} diverges as $t \to \infty$.
\label{th:f}
\end{theorem}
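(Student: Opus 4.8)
The plan is to mimic the strategy of Theorems~\ref{th:m} and~\ref{th:c}: find a vector $q$ and a constant $s>0$ such that $q^{\sf T}\dot x \ge s$ along every Filippov solution, so that $q^{\sf T}\phi(t)$ grows without bound and $\|\phi(t)\|\to\infty$. The natural candidate is again $q^{\sf T}=e_1^{\sf T}{\rm adj}(A)$, the choice that already controls the sign of $x^L_1(\mu)$ via \eqref{eq:inverse}. First I would record, as in the earlier proofs, that $s:=q^{\sf T}b\mu$ cannot vanish (else $x^L$ would lie on $\Sigma$, not be virtual), and normalise to $s>0$; virtuality of $x^L$ then forces $\det(A)<0$, hence $q^{\sf T}Ax=\det(A)x_1\ge 0$ whenever $x_1\le 0$. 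This immediately gives $q^{\sf T}(Ax+b\mu)\ge s$ on the closed left half-space, handling the $f^L$-segments exactly as before.

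The genuinely new ingredients are the constant right-hand field $c$ and the sliding field $f^S$. For the $f^R$-segments I need $q^{\sf T}c\ge s'>0$ for some fixed $s'$; here I would use the hypothesis $q^{\sf T}c\ne 0$ together with the virtuality of the pseudo-equilibrium $x^S$. The point is that $x^S(0)=0$ and $x^S(\mu)$ moves off $\Sigma$ linearly in $\mu$; the condition that determines on which side of $\Sigma$ (crossing vs.\ sliding) the pseudo-equilibrium sits, and hence whether it is virtual, is a sign condition that should be expressible in terms of $\mathrm{sign}(q^{\sf T}c)$, $\mathrm{sign}(c_1)$, and $\mathrm{sign}(\det A)$ — analogous to how $x^L_1$'s sign came out of \eqref{eq:cxL1}. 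So I expect ``$x^S$ virtual'' to pin down $\mathrm{sign}(q^{\sf T}c)$ to be positive (given the normalisations already made), delivering $q^{\sf T}c>0$; I would then take $s$ to be the minimum of $q^{\sf T}b\mu$ and $q^{\sf T}c$.

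For the sliding segments, $f^S$ is by \eqref{eq:ffS} a convex combination $\lambda f^L+(1-\lambda)f^R$ with $\lambda=\frac{-f^R_1}{f^L_1-f^R_1}\in[0,1]$ on an attracting sliding region (and the formula still makes sense on repelling ones). Since on $\Sigma$ we have $x_1=0$, the left-field estimate gives $q^{\sf T}f^L(x;\mu)=q^{\sf T}b\mu=s$ exactly, while $q^{\sf T}f^R=q^{\sf T}c$; convexity then yields $q^{\sf T}f^S=\lambda s+(1-\lambda)(q^{\sf T}c)\ge \min(s,q^{\sf T}c)>0$. Thus on all three types of segment $\frac{d}{dt}q^{\sf T}\phi(t)$ is bounded below by a fixed positive constant, and since Filippov solutions are absolutely continuous concatenations of such segments, $q^{\sf T}\phi(t)\to\infty$ and the solution diverges.

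The main obstacle I anticipate is the bookkeeping in the second paragraph: correctly deriving that virtuality of $x^S$ forces $q^{\sf T}c>0$. This requires writing down $x^S(\mu)$ explicitly (or at least its defining equations: $x^S\in\Sigma$ and $f^S(x^S;\mu)=0$, equivalently $f^L(x^S;\mu)$ is parallel to $f^R(x^S;\mu)=c$ with the convexity coefficient in $[0,1]$), extracting the scalar that changes sign as $x^S$ crosses from a sliding to a crossing region, and matching its sign to the product of signs already fixed. The continuity-of-the-field-through-tangencies remark in the introduction suggests the clean way to see it: at a tangency point $f^L_1=0$ one has $f^S=f^L$, so the first component of the ``sliding flow'' matches $f^L_1$'s sign structure, and the location of $x^S$ relative to the tangency line is governed by the same adjugate row $q$. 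I would also need to confirm that no solution can get ``stuck'' — e.g.\ reach the boundary of a sliding region and stall — but the uniform lower bound $q^{\sf T}\dot x\ge s>0$ rules this out, since $q^{\sf T}\phi$ is strictly increasing regardless of which segment type the solution is currently in.
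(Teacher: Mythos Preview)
Your proposal is correct and follows essentially the same route as the paper: the same choice $q^{\sf T}=e_1^{\sf T}{\rm adj}(A)$, the normalisation $s=q^{\sf T}b\mu>0$ forcing $\det(A)<0$, the deduction of $q^{\sf T}c>0$ from virtuality of $x^S$ (which the paper carries out explicitly via the identity $f^L_1(x^S;\mu)\,f^R_1(x^S;\mu)=s c_1^2/q^{\sf T}c$), and the convex-combination bound $q^{\sf T}f^S\ge\min(s,q^{\sf T}c)$ on sliding regions. Your uniform lower bound on $q^{\sf T}\dot\phi$ across all segment types is in fact slightly tidier than the paper's closing case split on the sign of $c_1$, but the arguments are otherwise the same.
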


Let us first give some intuition behind Theorem \ref{th:f}.
As in the previous proof, below we obtain $q^{\sf T} f^L(x;\mu) > 0$ for points with $x_1 \le 0$;
but instead of $f^R$ we work with $f^S$ because the second equilibrium is a zero of $f^S$ not $f^R$.
Filippov solutions switch from following $f^S$ to $f^L$ by passing through points $x \in \Sigma$ for which $f^L_1(x;\mu) = 0$,
and notice $f^S(x;\mu) = f^L(x;\mu)$ at such points by \eqref{eq:ffS}.
This is a type of continuity constraint that 
together with the assumption that $x^S(\mu)$ is virtual,
leads to the inequality $q^{\sf T} f^S(x;\mu) > 0$ throughout sliding regions
and the conclusion that all Filippov solutions diverge.

\begin{proof}
By \eqref{eq:inverse} applied to \eqref{eq:fxL},
\begin{equation}
x^L_1(\mu) = -\frac{q^{\sf T} b \mu}{\det(A)}.
\label{eq:fxL1}
\end{equation}
Thus the admissibility of $x^L(\mu)$ is determined by the signs of $\det(A)$ and
\begin{equation}
s = q^{\sf T} b \mu.
\nonumber
\end{equation}
Notice $s \ne 0$ because $x^L(\mu)$ is assumed to be virtual.

We now derive an analogous characterisation for the admissibility of $x^S(\mu)$.
Since $x^S(\mu)$ is zero of $f^S$,
\begin{equation}
f^L_1(x^S(\mu);\mu) f^R(x^S(\mu);\mu) - f^R_1(x^S(\mu);\mu) f^L(x^S(\mu);\mu) = 0.
\nonumber
\end{equation}
By multiplying both sides of this equation by $q^{\sf T}$, then solving for $f^L_1$, we obtain
\begin{equation}
f^L_1(x^S(\mu);\mu) = \frac{f^R_1(x^S(\mu);\mu) q^{\sf T} f^L(x^S(\mu);\mu)}{q^{\sf T} f^R(x^S(\mu);\mu)}.
\label{eq:fAdmProof}
\end{equation}
Notice $f_R(x^S(\mu);\mu) = c$ and
\begin{align*}
q^{\sf T} f^L(x^S(\mu);\mu)
&= q^{\sf T} \left( A x^S(\mu) + b \mu \right) \\
&= \det(A) x^S_1(\mu) + s \\
&= s,
\end{align*}
where $x^S_1(\mu) = 0$ because $x^S(\mu) \in \Sigma$.
Inserting these into \eqref{eq:fAdmProof} produces
\begin{equation}
f^L_1(x^S(\mu);\mu) = \frac{s c_1}{q^{\sf T} c}.
\nonumber
\end{equation}
Thus the admissibility of $x^S(\mu)$ is determined by the sign of
\begin{equation}
f^L_1(x^S(\mu);\mu) f^R_1(x^S(\mu);\mu) = \frac{s c_1^2}{q^{\sf T} c}.
\label{eq:fAdm}
\end{equation}

Without loss of generality suppose $s > 0$.
Then $\det(A) < 0$, because $x^L(\mu)$ is virtual,
and $q^{\sf T} c > 0$, because $x^S(\mu)$ is virtual.
For any $x \in \mathbb{R}^n$ with $x_1 \le 0$,
\begin{equation}
q^{\sf T} f^L(x;\mu) \ge s,
\nonumber
\end{equation}
as in the previous section.
For any $x \in \Sigma$ for which the denominator of \eqref{eq:ffS} is nonzero,
\begin{align*}
q^{\sf T} f^S(x;\mu)
&= \frac{f^L_1(x;\mu) q^{\sf T} c - c_1 q^{\sf T} f^L(x;\mu)}{f^L_1(x;\mu) - c_1} \\
&= \frac{s + r(x;\mu) q^{\sf T} c}{1 + r(x;\mu)},
\end{align*}
where $q^{\sf T} f^L(x;\mu) = s$ because $x_1 = 0$,
and we have let $r(x;\mu) = -\frac{f^L_1(x;\mu)}{c_1}$.
If $f^L_1(x;\mu) c_1 \le 0$, i.e.~$x$ does not belong to a crossing region, then $r(x;\mu) \ge 0$.
Notice $q^{\sf T} f^S(x;\mu)$ ranges monotonically from $s > 0$ at $r = 0$ to $q^{\sf T} c > 0$ as $r \to \infty$.
Thus at all $x \in \Sigma$ not on a crossing region
the denominator of \eqref{eq:ffS} is nonzero and
\begin{equation}
q^{\sf T} f^S(x;\mu) \ge {\rm min} \left( s, q^{\sf T} c \right) > 0.
\nonumber
\end{equation}
Finally consider the behaviour of an arbitrary Filippov solution $\phi(t)$ to \eqref{eq:ff} as $t \to \infty$.
If $c_1 < 0$ then $\phi(t)$ eventually evolves exclusively under $f^L$ and $f^S$ and diverges,
while if $c_1 > 0$ then $\phi(t)$ either evolves exclusively under $f^L$ and $f^S$ and diverges,
or eventually evolves exclusively under $f^R$ and diverges.
\end{proof}

\section{Impacting hybrid systems}
\label{sec:hybrid}

Finally we consider hybrid systems of the form
\begin{equation}
\begin{split}
\dot{x} &= f(x;\mu), \quad \text{for $x_1 < 0$}, \\
x &\mapsto g(x;\mu), \quad \text{whenever $x_1 = 0$},
\end{split}
\label{eq:hfGen}
\end{equation}
where $f$ and $g$ are $C^1$.
The following assumptions are motivated by the view
that $x_1(t) < 0$ represents the displacement of a rigid object
relative to a wall located at $x_1 = 0$.
The map $g$ is a reset law that models impacts as instantaneous events with velocity reversal and possibly energy loss.

Let $\Sigma = \left\{ x \in \mathbb{R}^n \,\middle|\, x_1 = 0 \right\}$ denote the switching manifold, and
\begin{equation}
v(x;\mu) = f_1(x;\mu)
\label{eq:hvGen}
\end{equation}
denote the velocity of the object relative to the wall.
We partition $\Sigma$ into the {\em incoming set} $\Sigma_{\rm in} = \left\{ x \in \Sigma \,\middle|\, v(x;\mu) > 0 \right\}$,
the {\em outgoing set} $\Sigma_{\rm out} = \left\{ x \in \Sigma \,\middle|\, v(x;\mu) < 0 \right\}$,
and the {\em grazing set} $\Gamma = \left\{ x \in \Sigma \,\middle|\, v(x;\mu) = 0 \right\}$, Fig.~\ref{fig:trivHybridSchem}.
We assume $g$ maps $\Sigma_{\rm in}$ to $\Sigma_{\rm out}$
and is the identity map on $\Gamma$, thus
\begin{equation}
g(x;\mu) = x + v(x;\mu) h(x;\mu),
\label{eq:hg}
\end{equation}
for some $C^1$ function $h$.
The following definition uses
\begin{equation}
a(x;\mu) = \nabla v(x;\mu)^{\sf T} f(x;\mu),
\label{eq:haGen}
\end{equation}
which represents the acceleration of the object relative to the switching manifold.

\begin{figure}[b!]
\begin{center}
\includegraphics[width=10cm]{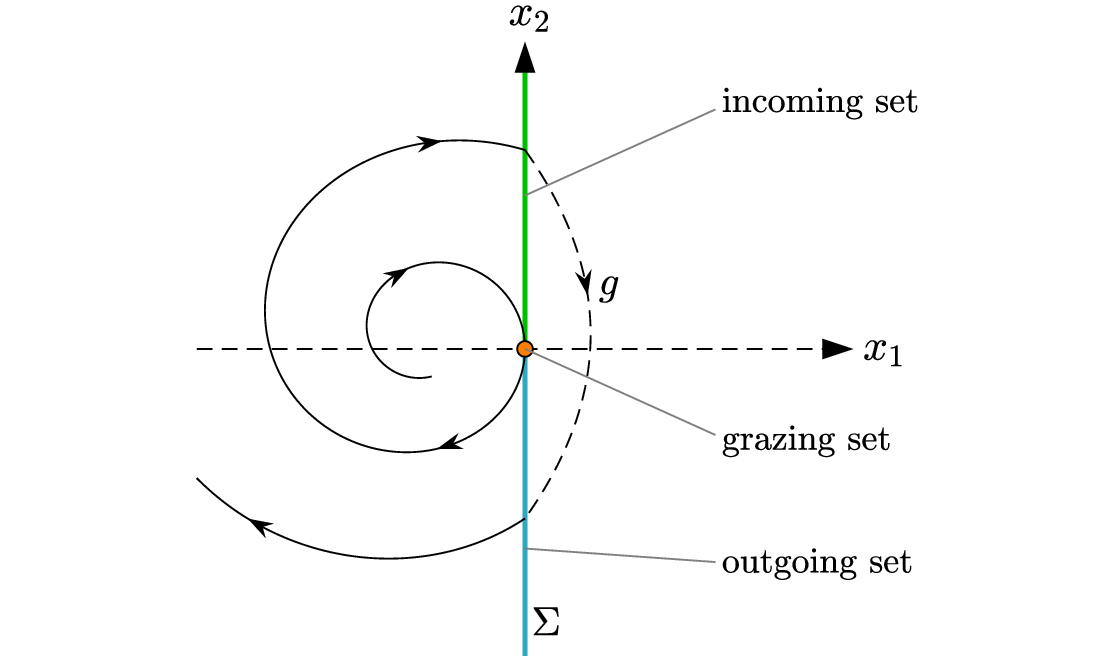}
\caption{
A schematic phase portrait of a two-dimensional hybrid system of the form \eqref{eq:hfGen}.
\label{fig:trivHybridSchem}
} 
\end{center}
\end{figure}

\begin{definition}
Consider a system \eqref{eq:hfGen} with fixed $\mu \in \mathbb{R}$.
A subset $S \subset \Gamma$ is a
\begin{enumerate}
\item
a {\em sticking region} if $a(x;\mu) > 0$ for all $x \in S$, and
\item
a {\em detaching region} if $a(x;\mu) < 0$ for all $x \in S$.
\end{enumerate}
\label{df:hRegions}
\end{definition}

Sticking regions and detaching regions are analogous to sliding regions
and crossing regions of Filippov systems.
On sticking regions it is natural to define the {\em sticking vector field}
\begin{equation}
f^{St}(x;\mu) = \left( I - \frac{h(x;\mu) \nabla v(x;\mu)^{\sf T}}{\nabla v(x;\mu)^{\sf T} h(x;\mu)} \right) f(x;\mu),
\label{eq:hfStGen}
\end{equation}
given by projecting $f$ onto $\Gamma$ by translating in the direction $h$
(see Section 2.2.4 of di Bernardo {\em et al.}~\cite{DiBu08}).
Solutions to \eqref{eq:hfGen} are concatenations of smooth segments of motion in $x_1 < 0$ under $f$,
instantaneous jumps defined by $g$,
and smooth segments of motion on sticking regions under $f^{St}$.
Regular equilibria of \eqref{eq:hfGen} are zeros of $f$;
pseudo-equilibria of \eqref{eq:hfGen} are zeros of $f^{St}$:

\begin{definition}
A point $x \in \Gamma$ is a {\em pseudo-equilibrium} of \eqref{eq:hfGen} if $f^{St}(x;\mu) = 0$.
It is {\em admissible} if it belongs to a sticking region,
and {\em virtual} if it belongs to a detaching region.
\label{df:hPsEq}
\end{definition}

Now suppose \eqref{eq:hfGen} has a BEB
caused by a regular equilibrium hitting $\Sigma$ at $x = 0$ when $\mu = 0$.
By replacing $f(x;\mu)$ with its linearisation about $(x;\mu) = (0;0)$, and $h(x;\mu)$ with its value at $(x;\mu) = (0;0)$,
we obtain the truncated form
\begin{equation}
\begin{split}
\dot{x} &= A x + b \mu, \quad \text{for $x_1 < 0$}, \\
x &\mapsto x + v(x;\mu) c, \quad \text{whenever $x_1 = 0$}.
\end{split}
\label{eq:hf}
\end{equation}
where $A \in \mathbb{R}^{n \times n}$ and $b, c \in \mathbb{R}^n$ with $c_1 = 0$; also
\begin{equation}
v(x;\mu) = e_1^{\sf T} (A x + b \mu).
\label{eq:hv}
\end{equation}
The regular equilibrium $x^L(\mu)$ is given by \eqref{eq:fxL}, assuming $\det(A) \ne 0$.
Note that some authors prefer an equivalent truncated form
for which the regular equilibrium is fixed at origin \cite{TaCh23}.

Let $x' = x + v(x;\mu) c$ denote image of the reset law.
Then
\begin{equation}
v(x';\mu) = \left( 1 + e_1^{\sf T} A c \right) v(x;\mu),
\nonumber
\end{equation}
and hence the condition
\begin{equation}
e_1^{\sf T} A c < -1
\label{eq:hCond}
\end{equation}
ensures the reset law maps the incoming set to the outgoing set.
If $q^{\sf T} c \ne 0$ then \eqref{eq:hf}
has a unique pseudo-equilibrium $x^{St}(\mu)$ with $x^{St}(0) = 0$ \cite{DiNo08}.

\begin{theorem}
If $\det(A) \ne 0$, $e_1^{\sf T} A c < -1$, $q^{\sf T} c \ne 0$, and $x^L(\mu)$ and $x^{St}(\mu)$ are virtual,
then every solution to \eqref{eq:hf} diverges as $t \to \infty$.
\label{th:h}
\end{theorem}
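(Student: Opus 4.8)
The plan is to mirror the proofs of Theorems \ref{th:c} and \ref{th:f}: produce a linear functional whose value increases without bound along every solution. Set $q^{\sf T} = e_1^{\sf T}\,{\rm adj}(A)$, so that $q^{\sf T} A = \det(A)\,e_1^{\sf T}$ by \eqref{eq:adjugateIdentity} and hence $q^{\sf T} A x = \det(A)\,x_1$. By \eqref{eq:inverse}, $x^L_1(\mu) = -\frac{q^{\sf T} b\mu}{\det(A)}$, so with $s = q^{\sf T} b\mu$ the virtuality of $x^L(\mu)$ forces $s \ne 0$; without loss of generality take $s > 0$, which then forces $\det(A) < 0$.

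The first new step is to convert the virtuality of the pseudo-equilibrium $x^{St}(\mu)$ into a sign condition on $q^{\sf T} c$. In the truncated form \eqref{eq:hf} we have $h \equiv c$ and $\nabla v \equiv A^{\sf T} e_1$, so \eqref{eq:hfStGen} reads $f^{St}(x;\mu) = (Ax + b\mu) - \frac{a(x;\mu)}{e_1^{\sf T} A c}\,c$, where $a(x;\mu) = e_1^{\sf T} A(Ax + b\mu)$. Thus $f^{St}(x^{St};\mu) = 0$ is equivalent to $A x^{St} + b\mu = \alpha c$ for some $\alpha \in \mathbb{R}$; imposing $x^{St}_1 = 0$ and applying $q^{\sf T}$ gives $s = \alpha\,q^{\sf T} c$, so $\alpha = \frac{s}{q^{\sf T} c}$. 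Since $a(x^{St};\mu) = e_1^{\sf T} A(\alpha c) = \alpha\,e_1^{\sf T} A c$ and $e_1^{\sf T} A c < -1 < 0$, the point $x^{St}(\mu)$ lies in a detaching region — i.e.\ is virtual — precisely when $\alpha > 0$, that is, when $q^{\sf T} c > 0$. Under the hypotheses we therefore have $\det(A) < 0$ and $q^{\sf T} c > 0$.

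Next I would check that $q^{\sf T} x$ increases along each of the three ingredients of a solution. (i) While $x_1 \le 0$, $\frac{d}{dt} q^{\sf T} x = q^{\sf T}(Ax + b\mu) = \det(A)\,x_1 + s \ge s > 0$, exactly as in \S\ref{sec:cont}. (ii) A reset occurs at a point $x$ of the incoming set, where $v(x;\mu) > 0$, and sends $x$ to $x' = x + v(x;\mu)c$, so $q^{\sf T} x' - q^{\sf T} x = v(x;\mu)\,q^{\sf T} c > 0$. (iii) On a sticking region, where $a(x;\mu) > 0$, using $q^{\sf T} A x = \det(A)\,x_1 = 0$ on $\Gamma$ one obtains $q^{\sf T} f^{St}(x;\mu) = s - \frac{q^{\sf T} c\; a(x;\mu)}{e_1^{\sf T} A c} \ge s > 0$, the correction term being positive because $q^{\sf T} c > 0$, $a(x;\mu) > 0$, and $e_1^{\sf T} A c < 0$. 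Hence along any solution $\phi(t)$ the quantity $q^{\sf T}\phi$ is strictly increasing, with derivative at least $s$ throughout every continuous phase, so $q^{\sf T}\phi(t) \to \infty$ and therefore $\|\phi(t)\| \to \infty$.

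I expect the main obstacle to be bookkeeping of the solution concept rather than any estimate above. One must ensure solutions are defined up to arbitrarily large $t$: free-flight and sticking segments obey linear ODEs and so cannot blow up in finite time, but an accumulation of impacts (chattering) can happen in finite time. Here I would invoke the standard fact that such a chattering sequence accumulates on the closure of a sticking region and that the solution is continued onto that region (see Section 2.2.4 of \cite{DiBu08}); since $q^{\sf T} x$ is monotone across the whole chattering sequence and then grows at rate $\ge s$ on the sticking region, divergence as $t \to \infty$ still follows. A minor additional point is that between two consecutive non-chattering impacts a positive amount of time elapses, because after a reset $v(x';\mu) = (1 + e_1^{\sf T} A c)\,v(x;\mu) < 0$, so the solution genuinely re-enters $x_1 < 0$.
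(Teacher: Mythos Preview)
Your proposal is correct and follows essentially the same route as the paper: define $q^{\sf T}=e_1^{\sf T}{\rm adj}(A)$ and $s=q^{\sf T}b\mu$, extract $\det(A)<0$ and $q^{\sf T}c>0$ from the two virtuality assumptions, and then verify $q^{\sf T}x$ increases along free flight, at each reset, and along sticking motion. Your derivation of $q^{\sf T}c>0$ (writing $Ax^{St}+b\mu=\alpha c$ and reading off $\alpha=s/q^{\sf T}c$) is algebraically equivalent to the paper's formula \eqref{eq:hAdm}, and your three estimates coincide with the paper's \eqref{eq:hFinal1}--\eqref{eq:hFinal3}; your closing discussion of chattering and forward-in-time well-posedness is additional care that the paper does not spell out.
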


Notice $f^{St}(x;\mu) = f(x;\mu) - \frac{a(x;\mu) h(x;\mu)}{\nabla v(x;\mu)^{\sf T} h(x;\mu)}$,
so at any point $x \in \Gamma$ for which $a(x;\mu) = 0$ we have $f^{St}(x;\mu) = f(x;\mu)$.
This continuity underpins the following proof.
The proof is similar to that of the previous section
except we also need to consider the action of the reset law relative to the direction $q$ (see \eqref{eq:hFinal3}).

\begin{proof}
We first characterise the admissibility of $x^{St}(\mu)$.
Observe $h(x;\mu) = c$ and $\nabla v(x;\mu)^{\sf T} = e_1^{\sf T} A$,
so the sticking vector field \eqref{eq:hfStGen} can be written as
\begin{equation}
f^{St}(x;\mu) = \frac{1}{e_1^{\sf T} A c} \left( e_1^{\sf T} A c f(x;\mu) - a(x;\mu) c \right).
\label{eq:hfStProof}
\end{equation}
Since $x^{St}(\mu)$ is a zero of $f^{St}$,
\begin{equation}
0 = e_1^{\sf T} A c f \big( x^{St}(\mu);\mu \big) - a \big( x^{St}(\mu);\mu \big) c.
\label{eq:hfSt}
\end{equation}
By multiplying both sides of this equation by
$q^{\sf T} = e_1^{\sf T} {\rm adj}(A)$, then solving for $a$, we obtain
\begin{equation}
a \big( x^{St}(\mu);\mu \big) = \frac{e_1^{\sf T} A c \,q^{\sf T} f \left( x^{St}(\mu);\mu \right)}{q^{\sf T} c}.
\nonumber
\end{equation}
But $x^{St}_1(\mu) = 0$, so
\begin{align*}
q^{\sf T} f \left( x^{St}(\mu);\mu \right)
&= q^{\sf T} (A x^{St}(\mu) + b \mu) \\
&= \det(A) x^{St}_1(\mu) + q^{\sf T} b \mu \\
&= s,
\end{align*}
where we again let $s = q^{\sf T} b \mu$, hence
\begin{equation}
a \big( x^{St}(\mu);\mu \big) = \frac{e_1^{\sf T} A c s}{q^{\sf T} c}.
\label{eq:hAdm}
\end{equation}

The first component of $x^L(\mu)$ is given by \eqref{eq:fxL1},
so we require $s \ne 0$ because $x^L(\mu)$ is virtual.
Without loss of generality suppose $s > 0$; then $\det(A) < 0$.
Also $x^{St}(\mu)$ is virtual, meaning $a \left( x^{St}(\mu);\mu \right) < 0$,
thus by \eqref{eq:hAdm} we have $q^{\sf T} c > 0$ (notice $e_1^{\sf T} A c < 0$ by the theorem statement).

For any $x \in \mathbb{R}^n$ with $x_1 \le 0$,
\begin{equation}
q^{\sf T} f(x;\mu) \ge s,
\label{eq:hFinal1}
\end{equation}
as in the proof of Theorem \ref{th:c}.
By \eqref{eq:hfStProof}, for any $x \in \Gamma$
\begin{equation}
q^{\sf T} f^{St}(x;\mu) = q^{\sf T} f(x;\mu) - \frac{a(x;\mu) q^{\sf T} c}{e_1^{\sf T} A c}.
\nonumber
\end{equation}
So since $q^{\sf T} c > 0$ and $e_1^{\sf T} A c < 0$,
for any $x \in \Gamma$ with $a(x;\mu) \ge 0$, i.e.~not in a detaching region, we have
\begin{equation}
q^{\sf T} f^{St}(x;\mu) \ge s,
\label{eq:hFinal2}
\end{equation}
using also \eqref{eq:hFinal1}.
Finally, for any $x \in \Sigma_{\rm in}$,
let $x' = x + v(x;\mu) c$ and observe
\begin{equation}
q^{\sf T}(x' - x) = v(x;\mu) q^{\sf T} c > 0,
\label{eq:hFinal3}
\end{equation}
because $v(x;\mu) > 0$.

By \eqref{eq:hFinal1}, \eqref{eq:hFinal2}, and \eqref{eq:hFinal3}
any solution $\phi(t)$ to \eqref{eq:hf} satisfies $\frac{d}{d t} q^{\sf T} \phi(t) \ge s > 0$
while in $x_1 < 0$ and on sticking regions,
plus whenever the reset law is applied the value of $q^{\sf T} \phi(t)$ increases,
hence $q^{\sf T} \phi(t) \to \infty$ as $t \to \infty$, so the solution diverges.
\end{proof}

\section{Discussion}
\label{sec:conc}

This paper has considered previously established truncated forms for BEBs and border-collision bifurcations
and shown that if two equilibria or fixed points are virtual then all orbits diverge.
It remains to determine if the same result holds for general systems,\
formulated as Conjecture \ref{cj:hots} in the case of the maps.
Implications to tipping points were discussed in \S\ref{sec:intro}.
We finish by discussing implications to bifurcation structures of truncated forms.

The `robust chaos' paper of Banerjee {\em et al.}~\cite{BaYo98}
considered a family of two-dimensional piecewise-linear maps of the form \eqref{eq:mg}
having $p^{\sf T} b \mu = 1$ fixed.
Their main result was an identification of a parameter region
where the fixed points $x^L(\mu)$ and $x^R(\mu)$ are admissible saddles and the map has a chaotic attractor.
One boundary of this region is $\det(I - A_L) = 0$
where $x^L(\mu)$ changes from admissible to virtual by \eqref{eq:mxL1}.
However, the attractor is unrelated to $x^L(\mu)$ so in fact it
persists beyond this boundary \cite{Si23e}.

Similarly crossing $\det(I - A_R) = 0$ rarely affects invariant sets other than $x^R(\mu)$.
But once $\det(I - A_L) = 0$ and $\det(I - A_R) = 0$ have both been crossed
Theorem \ref{th:m} shows that the map can no longer have an attractor, or in fact any bounded invariant set.
In this way the boundaries $\det(I - A_L) = 0$ and $\det(I - A_R) = 0$ {\em together}
have a {\em global} effect on the dynamics.
To illustrate this, Fig.~\ref{fig:trivTongues} shows how the attractor
of the three-dimensional border-collision normal form
varies with two parameters (white represents the absence of an attractor).
This form consists of \eqref{eq:mg} with
\begin{equation}
\begin{split}
A_L &= \begin{bmatrix} \tau_L & 1 & 0 \\ -\sigma_L & 0 & 1 \\ \delta_L & 0 & 0 \end{bmatrix}, \\
A_R &= \begin{bmatrix} \tau_R & 1 & 0 \\ -\sigma_R & 0 & 1 \\ \delta_R & 0 & 0 \end{bmatrix}, \\
b &= \begin{bmatrix} 1 \\ 0 \\ 0 \end{bmatrix}.
\end{split}
\label{eq:3dbcnfALARb}
\end{equation}
Notice an attractor exists for parameter values arbitrarily close to the corner
of the top-left quadrant where no attractor exists by Theorem \ref{th:m}.
Assuming Fig.~\ref{fig:trivTongues} is a typical two-dimensional slice of six-dimensional parameter space,
as appears to be the case from numerical explorations,
we can make the following conclusion.
Individually the boundaries $\det(I - A_L) = 0$ and $\det(I - A_R) = 0$ do not affect the attractor,
but together they form a codimension-two surface where the attractor is destroyed.

\begin{figure}[t!]
\begin{center}
\includegraphics[width=10cm]{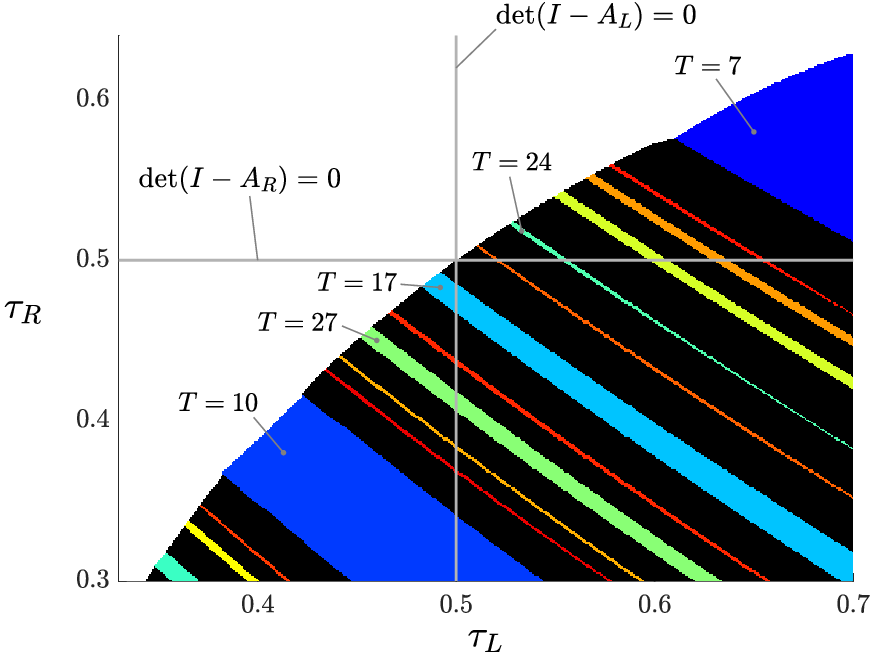}
\caption{
A two-parameter bifurcation diagram of \eqref{eq:mg} with \eqref{eq:3dbcnfALARb}
and $\sigma_L = 0$, $\delta_L = 0.5$, $\sigma_R = 1$, and $\delta_R = 1.5$.
This diagram was produced by computing $10^6$ iterates of the forward orbit of $x = 0$
over a $500 \times 500$ equi-spaced grid of $(\tau_L,\tau_R)$ values
and using the last few iterates to predict the existence of an attractor and its period.
The numerics identified a stable period-$T$ solution with $T \le 50$ in the coloured regions,
a higher period or aperiodic solution in the black regions,
and no attractor in the white regions.
\label{fig:trivTongues}
} 
\end{center}
\end{figure}

\section*{Acknowledgements}

This work was supported by Marsden Fund contract MAU2209 managed by Royal Society Te Ap\={a}rangi.

\appendix

\section{Details of the higher order terms example}
\label{app:hots}

Here we prove Conjecture \ref{cj:hots} for the example map \eqref{eq:mapExampleHOTs}.
Specifically we show the following.

\begin{proposition}
Consider the map \eqref{eq:mapExampleHOTs} with $\delta_L > 0$, $\delta_R < 0$, and $\alpha > 0$.
Then there exists $\eta > 0$ such that for any $\mu > 0$ and
$x \in B_\eta$ there exists positive $m \le \frac{6 \eta}{\mu}$ such that $\| f^m(x;\mu) \| > \eta$.
\label{pr:exampleHOTs}
\end{proposition}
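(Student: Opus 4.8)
The plan is to prove Proposition~\ref{pr:exampleHOTs} by exhibiting an explicit quadratic ``Lyapunov-like'' function $\Phi(x) = x_1 + x_2 - 2 x_2^2$ that increases along every orbit of \eqref{eq:mapExampleHOTs} by at least a fixed positive amount per iteration while the orbit remains in $B_\eta$. This mirrors the strategy of Theorem~\ref{th:m}, where the linear functional $p^{\sf T} x = x_1 + x_2$ worked, but as noted in \S\ref{sec:hots} the quadratic correction $-2x_2^2$ is needed to absorb the $-x_2^2$ term in the second component of $f$. The key computation is to evaluate $\Phi(f(x;\mu)) - \Phi(x)$ on each piece. Writing $y = f(x;\mu)$, in the region $x_1 \le 0$ we have $y_1 = (\delta_L + 1 - \alpha)x_1 + x_2 + \mu$ and $y_2 = -\delta_L x_1 - x_2^2$; substituting into $\Phi(y) = y_1 + y_2 - 2y_2^2$ and simplifying should give an expression of the form $\mu - \alpha x_1 - 2\delta_L^2 x_1^2 + (\text{cross and quartic terms})$, where crucially the troublesome $-x_2^2$ cancels against the linear part of $-2x_2^2$ in $\Phi$ up to the coefficient: indeed $-x_2^2 - 2x_2^2 \cdot(\text{stuff})$ must be arranged so the net $x_2$-dependence is controlled. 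The analogous computation holds in $x_1 \ge 0$ with $-\alpha x_1$ replaced by $+\alpha x_1$.

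The next step is to bound the remainder. After the cancellation, $\Phi(f(x;\mu)) - \Phi(x)$ should equal $\mu + (\text{terms that are at least quadratic and vanish at }x=0)$, and one must check that on a sufficiently small ball $B_\eta$ these extra terms are dominated by, say, $\tfrac{\mu}{2}$, or—better, since $\mu$ can be arbitrarily small—are bounded below by a quantity independent of $\mu$ that does not overwhelm $\mu$; actually the cleanest route is to show $\Phi(f(x;\mu)) - \Phi(x) \ge \mu$ for all $x \in B_\eta$ with $f(x;\mu) \in B_\eta$, by checking the non-$\mu$ terms are nonnegative there (using $\delta_L > 0$, $\delta_R < 0$, $\alpha > 0$, and smallness of $\eta$). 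Here the sign conditions enter: on $x_1 \le 0$ the term $-\alpha x_1 \ge 0$, and on $x_1 \ge 0$ the term $+\alpha x_1 \ge 0$, so the linear-in-$x_1$ contribution always helps, exactly as in Theorem~\ref{th:m}; the quadratic leftovers then need $\eta$ small. Once this is established, since $\Phi$ is continuous it is bounded on $B_\eta$, say $|\Phi| \le \tfrac{3\eta}{1}$ there (a crude bound $|\Phi(x)| \le \eta + \eta + 2\eta^2 \le 3\eta$ for $\eta \le \tfrac12$ suffices), so an orbit confined to $B_\eta$ for $m$ steps would satisfy $6\eta \ge \Phi(f^m) - \Phi(f^0) \ge m\mu$, forcing $m \le \tfrac{6\eta}{\mu}$; hence the orbit must exit $B_\eta$ within that many iterations, which is the claim.

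I expect the main obstacle to be the bookkeeping in the first step: verifying that the quartic and mixed terms produced by substituting the quadratic reset into the quadratic $\Phi$ genuinely assemble into something nonnegative (or at least $\ge -\tfrac{\mu}{2}$, though the $\mu$-free version is stronger and is what the stated bound $m \le \tfrac{6\eta}{\mu}$ wants) on a ball whose radius $\eta$ can be chosen uniformly in $\mu$. In particular one must confirm the coefficient $2$ in $\Phi = x_1 + x_2 - 2x_2^2$ is the right one to make the linear-in-$x_2^2$ cancellation work; a brief check: the $x_2$-linear contribution to $\Phi(f) - \Phi(x)$ from the first component of $f$ is $+x_2$ (from $y_1$) minus $x_2$ (from $-\Phi(x)$'s $x_2$), so these cancel, while the $x_2^2$ contribution is $-x_2^2$ (from $y_2$ in $y_1+y_2$) minus $(-2x_2^2)$ from $-\Phi(x)$, giving $+x_2^2 \ge 0$, which is in fact favorable—so the coefficient $2$ turns the dangerous $-x_2^2$ into a helpful $+x_2^2$, at the cost of the new quartic term $-2y_2^2 = -2(\delta_L x_1 + x_2^2)^2$, which is $O(\eta^3)$ on $B_\eta$ and hence absorbable for small $\eta$. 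The remaining work is to write this out carefully for both pieces and collect the estimates; I would relegate the full algebra to the appendix as the paper already indicates.
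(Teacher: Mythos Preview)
Your proposal is correct and matches the paper's proof essentially verbatim: the paper uses exactly $\Phi(x) = x_1 + x_2 - 2x_2^2$, computes $\Phi(f(x;\mu)) - \Phi(x)$ explicitly on each piece, shows the non-$\mu$ terms are nonnegative on $B_\eta$ with $\eta = \min\bigl(\tfrac{\alpha}{2\delta_L^2},\tfrac{\alpha}{2\delta_R^2},\tfrac{1}{\sqrt{2}}\bigr)$, and bounds the range of $\Phi$ on $B_\eta$ by $6\eta$ to get $m \le 6\eta/\mu$. Your one slip is calling $-2y_2^2$ of order $O(\eta^3)$ --- it is $O(\eta^2)$ because of the term $-2\delta_L^2 x_1^2$ --- but this is harmless since, as you correctly anticipated, that term combines with $-\alpha x_1$ to give $-(\alpha + 2\delta_L^2 x_1)x_1 \ge 0$ once $|x_1| \le \tfrac{\alpha}{2\delta_L^2}$, which is precisely why $\eta$ is chosen as above.
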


\begin{proof}
Let $\eta = {\rm min} \left( \frac{\alpha}{2 \delta_L^2}, \frac{\alpha}{2 \delta_R^2}, \frac{1}{\sqrt{2}} \right)$
and $\Phi(x) = x_1 + x_2 - 2 x_2^2$.
Notice $-4 \eta \le \Phi(x) \le 2 \eta$ for any $x \in B_\eta$,
so over $B_\eta$ the value of $\Phi(x)$ ranges by at most $6 \eta$.
Using \eqref{eq:mapExampleHOTs} direct calculations give
\begin{equation}
\Phi(f(x;\mu)) - \Phi(x) = \begin{cases}
\mu - \left( \alpha + 2 \delta_L^2 x_1 \right) x_1 - 4 \delta_L x_1 x_2^2 + \left( 1 - 2 x_2^2 \right) x_2^2 \,, & x_1 \le 0, \\
\mu + \left( \alpha - 2 \delta_R^2 x_1 \right) x_1 - 4 \delta_R x_1 x_2^2 + \left( 1 - 2 x_2^2 \right) x_2^2 \,, & x_1 \ge 0.
\end{cases}
\label{eq:phiDiff}
\end{equation}
Notice $\Phi(f(x;\mu)) - \Phi(x) \ge \mu$ for any $\mu > 0$ and $x \in B_\eta$,
using $\delta_L > 0$, $\delta_R < 0$, and $\alpha > 0$.
So in $B_\eta$ the value of $\Phi$ increases by at least $\mu$ under every iteration of $f$,
thus forward orbits of $f$ escape $B_\eta$ within at most $\frac{6 \eta}{\mu}$ iterations.
\end{proof}


\end{document}